\documentclass[fleqn]{article}

\usepackage{amsmath,amsfonts,amssymb,amsthm,url,authblk}

\title{Uniform Lyndon interpolation property in propositional modal logics}
\author{Taishi Kurahashi\thanks{kurahashi@n.kisarazu.ac.jp}}
%\affil{Department of Natural Science\\
%    National Institute of Technology, Kisarazu College\\
%   2-11-1 Kiyomidai-higashi, Kisarazu, Chiba 292-0041\\
%    Japan}
\date{}

\theoremstyle{plain}
\newtheorem{thm}{Theorem}[section]
\newtheorem{lem}[thm]{Lemma}
\newtheorem{prop}[thm]{Proposition}
\newtheorem{cor}[thm]{Corollary}
\newtheorem{fact}[thm]{Fact}
\newtheorem{prob}[thm]{Problem}

\theoremstyle{defn}
\newtheorem{defn}[thm]{Definition}
\newtheorem{ex}[thm]{Example}
\newtheorem{rem}[thm]{Remark}

\newcommand{\K}{{\bf K}}

\newcommand{\Sub}{{\sf Sub}}
\newcommand{\Th}{{\rm Th}}
\newcommand{\Cl}{\mathcal {C}}

\begin{document}
\maketitle

\abstract{
We introduce and investigate the notion of uniform Lyndon interpolation property (ULIP) which is a strengthening of both uniform interpolation property and Lyndon interpolation property. 
We prove several propositional modal logics including ${\bf K}$, ${\bf KB}$, ${\bf GL}$ and ${\bf Grz}$ enjoy ULIP. 
Our proofs are modifications of Visser's proofs of uniform interpolation property using layered bisimulations \cite{Vis96}. 
Also we give a new upper bound on the complexity of uniform interpolants for ${\bf GL}$ and ${\bf Grz}$. 
}

\section{Introduction}\label{Sec:Intro}

Craig's interpolation property was originally proved by Craig \cite{Cra57} for classical first-order predicate logic, and it is a standard property that a logic is expected to possess. 
A lot of investigations of Craig interpolation property have been done in the field of modal logic (see \cite{GM05}). 
A propositional modal logic $L$ has the Craig interpolation property (CIP) if for any formulas $\varphi$ and $\psi$, if $\varphi \to \psi$ is provable in $L$, then there exists a formula $\theta$ containing only propositional variables that occur in both $\varphi$ and $\psi$ such that $\varphi \to \theta$ and $\theta \to \psi$ are provable in $L$. 

Some propositional normal modal logics such as $\K$, ${\bf KD}$, ${\bf KT}$, ${\bf KB}$, ${\bf K4}$, ${\bf S4}$, ${\bf S5}$, ${\bf GL}$ and ${\bf Grz}$ enjoy CIP, and others not (see \cite{Boo80,Gab72,Rau83,Sch76,Smo78}). 
Several weaker versions of interpolation property such as IPD, IPR and WIP are investigated (see \cite{Mak06}). 
On  the other hand, there are two stronger versions of interpolation property, namely Lyndon interpolation property and uniform interpolation property. 

Lyndon's interpolation property was introduced by Lyndon \cite{Lyn59} who proved that classical first order predicate logic enjoys this property. 
A logic $L$ is said to enjoy the Lyndon interpolation property (LIP) if $\varphi \to \psi$ is provable in $L$, then there exists a formula $\theta$ such that $\varphi \to \theta$ and $\theta \to \psi$ are provable in $L$, and the variables occurring in $\theta$ positively (resp.~negatively) occur in both $\varphi$ and $\psi$ positively (resp.~negatively). 
Maskimova \cite{Mak82} and Fitting \cite{Fit83} studied LIP in modal logics, and proved that propositional logics $\K$, ${\bf KD}$, ${\bf KT}$, ${\bf K4}$, ${\bf S4}$ and ${\bf S5}$ possess LIP. 
Maksimova \cite{Mak91} asked whether logics ${\bf GL}$ and ${\bf Grz}$ enjoy LIP, and this problem was recently settled affirmatively for ${\bf GL}$ by Shamkanov \cite{Sham11} and for ${\bf Grz}$ by Maksimova \cite{Mak14}. 
Recently, Kuznets \cite{Kuz16} proved LIP for a wider class of propositional modal logics including the logics in the so-called modal cube of \cite{Gar17}. 
Maksimova \cite{Mak82} showed that there exist normal extensions of ${\bf S5}$ having CIP but do not have LIP (see also \cite{GM05}). 

Pitts \cite{Pit92} proved that intuitionistic propositional logic has the uniform interpolation property. 
A logic $L$ is said to have the uniform interpolation property (UIP) if for any formula $\varphi$ and any finite set $P$ of propositional variables, there exists a formula $\theta$ such that $\theta$ does not contain propositional variables in $P$ and it uniformly interpolates all $L$-provable implications $\varphi \to \psi$ in $L$ where $\psi$ does not contain propositional variables in $P$. 
Shavrukov \cite{Shav93} proved that the propositional modal logic ${\bf GL}$ has UIP. 
UIP for $\K$, ${\bf Grz}$, and ${\bf KT}$ were proved by Ghilardi \cite{Ghi95} and Visser \cite{Vis96}, Visser \cite{Vis96}, and B\'ilkov\'a \cite{Bil07}, respectively. 
See also \cite{Bil16,Iem18}. 
However, it was proved by Ghilardi and Zawadowski \cite{GZ95} that the modal logic ${\bf S4}$ does not enjoy UIP, and B\'ilkov\'a \cite{Bil07} also showed the same result for ${\bf K4}$. 

So far, it has been studied separately that each logic has UIP and that logic has LIP. 
In this paper, we give a framework which can simultaneously derive that a logic enjoys both UIP and LIP. 
Namely, we introduce the notion of uniform Lyndon interpolation property (ULIP), and investigate this newly introduced notion. 

In Section \ref{Sec:IP}, we show that ULIP is actually stronger than both UIP and LIP. 
Also we prove several basic behaviors of ULIP. 
Then we show that ULIP for the propositional modal logics ${\bf K5}$, ${\bf KD5}$, ${\bf K45}$, ${\bf KD45}$, ${\bf KB5}$ and ${\bf S5}$ easily follows from LIP for each of them. 
In Section \ref{Sec:BBS}, we introduce the notion of layered $(P, Q)$-bisimulation between Kripke models which is a main tool of our proofs. 
ULIP for the propositional modal logics ${\bf K}$, ${\bf KD}$, ${\bf KT}$, ${\bf KB}$, ${\bf KDB}$ and ${\bf KTB}$ is proved in Section \ref{Sec:K}. 
Consequently, we obtain both UIP and LIP for these logics. 
UIP for ${\bf KB}$, ${\bf KDB}$ and ${\bf KTB}$ are probably new. 
At last, we prove ULIP for ${\bf GL}$ and ${\bf Grz}$ in Section \ref{Sec:GL}. 
Our proofs of ULIP are modifications of Visser's proofs \cite{Vis96} of UIP using layered bisimulations. 
Especially for ${\bf GL}$ and ${\bf Grz}$, we give a new upper bound on the complexity of uniform interpolants.

\section{Interpolation properties in propositional modal logics}\label{Sec:IP}

In this section, we introduce some variations of interpolation property. 
In particular, we newly introduce the notion of uniform Lyndon interpolation property, and we investigate several basic behaviors of uniform Lyndon interpolation property. 

The language of propositional modal logic consists of countably many propositional variables $p_0, p_1, p_2, \ldots$, the logical constant $\bot$, and the connectives $\to$ and $\Box$. 
The other symbols such as $\top$, $\land$ and $\Diamond$ are introduced as abbreviations. 
Formulas are defined in the usual way. 

\begin{defn}
We define the modal depth $d(\varphi)$ of a formula $\varphi$ recursively as follows: 
\begin{enumerate}
	\item $d(p) = 0$ for each propositional variable $p$; 
	\item $d(\bot) = 0$; 
	\item $d(\varphi \to \psi) = \max\{d(\varphi), d(\psi)\}$; 
	\item $d(\Box \varphi) = d(\varphi) + 1$. 
\end{enumerate}
\end{defn}

For each formula $\varphi$, let $\Sub(\varphi)$ be the set of all subformulas of $\varphi$. 
We recursively define the sets $v^+(\varphi)$ and $v^-(\varphi)$ of variables occurring in $\varphi$ positively and negatively, respectively.   
\begin{enumerate}
	\item $v^+(p_i) = \{p_i\}$ and $v^-(p_i) = \emptyset$; 
	\item $v^+(\bot) = v^-(\bot) = \emptyset$; 
	\item $v^+(\psi \to \theta) = v^-(\psi) \cup v^+(\theta)$ and $v^-(\psi \to \theta) = v^+(\psi) \cup v^-(\theta)$; 
	\item $v^+(\Box \psi) = v^+(\psi)$ and $v^-(\Box \psi) = v^-(\psi)$. 
\end{enumerate}
Let $v(\varphi) = v^+(\varphi) \cup v^-(\varphi)$ be the set of all propositional variables occurring in $\varphi$. 

A set of formulas is said to be a {\it normal logic} if it contains all propositional tautologies and the formula $\Box (p \to q) \to (\Box p \to \Box q)$, and is closed under modus ponens, necessitation and uniform substitution. 
For any normal logic $L$ and any formula $\varphi$, $\varphi \in L$ is also denoted by $L \vdash \varphi$. 
The least normal logic is called $\K$. 
Also for each set $X$ of formulas, the least normal logic including $X$ is denoted by $\K + X$. 
Several normal logics are defined as follows: 

\begin{defn}\leavevmode
\begin{itemize}
	\item ${\bf KD} = \K + \{\neg \Box \bot\}$
	\item ${\bf KT} = \K + \{\Box p \to p\}$
	\item ${\bf K4} = \K + \{\Box p \to \Box \Box p\}$
	\item ${\bf KD4} = \K + \{\neg \Box \bot, \Box p \to \Box \Box p\}$
	\item ${\bf S4} = \K + \{\Box p \to p, \Box p \to \Box \Box p\}$
	\item ${\bf K5} = \K + \{\Diamond p \to \Box \Diamond p\}$
	\item ${\bf KD5} = \K + \{\neg \Box \bot, \Diamond p \to \Box \Diamond p\}$
	\item ${\bf K45} = \K + \{\Box p \to \Box \Box p, \Diamond p \to \Box \Diamond p\}$
	\item ${\bf KD45} = \K + \{\neg \Box \bot, \Box p \to \Box \Box p, \Diamond p \to \Box \Diamond p\}$
	\item ${\bf KB} = \K + \{p \to \Box \Diamond p\}$
	\item ${\bf KDB} = \K + \{\neg \Box \bot, p \to \Box \Diamond p\}$
	\item ${\bf KTB} = \K + \{\Box p \to p, p \to \Box \Diamond p\}$
	\item ${\bf KB5} = \K + \{p \to \Box \Diamond p, \Diamond p \to \Box \Diamond p\}$
	\item ${\bf S5} = \K + \{\Box p \to p, \Diamond p \to \Box \Diamond p\}$
	\item ${\bf GL} = \K + \{\Box (\Box p \to p) \to \Box p\}$
	\item ${\bf Grz} = \K + \{\Box(\Box(p \to \Box p) \to p) \to p\}$
\end{itemize}
\end{defn}

We define the translation $\star$ of formulas as follows (see \cite{Boo80,Gold78}): 
\begin{enumerate}
	\item $p^\star \equiv p$;
	\item $\bot^\star \equiv \bot$; 
	\item $(\varphi \to \psi)^\star \equiv (\varphi^\star \to \psi^\star)$; 
	\item $(\Box \varphi)^\star \equiv \varphi^\star \land \Box \varphi^\star$. 
\end{enumerate}
For any normal logic $L$, let $L^\star$ be the logic $\{\varphi : L \vdash \varphi^\star\}$. 
Then $L^\star$ is also a normal logic. 

\begin{ex}\leavevmode
\begin{itemize}
	\item $\K^\star = {\bf KD}^\star = {\bf KT}$. 
	\item ${\bf KB}^\star = {\bf KDB}^\star = {\bf KTB}$. 
	\item ${\bf K4}^\star = {\bf KD4}^\star = {\bf S4}$. 
	\item ${\bf GL}^\star = {\bf Grz}$ (see \cite{Boo80,Gold78}). 
\end{itemize}
\end{ex}

We introduce the notion of Craig interpolation property (CIP). 
All normal logics introduced above enjoy CIP. 

\begin{defn}
We say a logic $L$ enjoys the {\it Craig interpolation property} (CIP) if for any formulas $\varphi$ and $\psi$, if $L \vdash \varphi \to \psi$, then there exists a formula $\theta$ satisfying the following properties: 
\begin{enumerate}	
	\item $v(\theta) \subseteq v(\varphi) \cap v(\psi)$; 
	\item $L \vdash \varphi \to \theta$; 
	\item $L \vdash \theta \to \psi$. 
\end{enumerate}
Such a formula $\theta$ is said to be a {\it Craig interpolant} of $\varphi \to \psi$ in $L$. 
\end{defn}

Secondly, we introduce Lyndon interpolation property (LIP). 
LIP is stronger than CIP, and all normal logics introduced above also enjoy LIP. 

\begin{defn}
We say a logic $L$ enjoys the {\it Lyndon interpolation property} (LIP) if for any formulas $\varphi$ and $\psi$, if $L \vdash \varphi \to \psi$, then there exists a formula $\theta$ satisfying the following properties: 
\begin{enumerate}	
	\item $v^+(\theta) \subseteq v^+(\varphi) \cap v^+(\psi)$; 
	\item $v^-(\theta) \subseteq v^-(\varphi) \cap v^-(\psi)$; 
	\item $L \vdash \varphi \to \theta$;
	\item $L \vdash \theta \to \psi$. 
\end{enumerate}
Such a formula $\theta$ is said to be a {\it Lyndon interpolant} of $\varphi \to \psi$ in $L$. 
\end{defn}

Thirdly, we introduce uniform interpolation property (UIP). 
UIP is a stronger property than CIP. 

\begin{defn}
We say a logic $L$ enjoys the {\it uniform interpolation property} (UIP) if for any formula $\varphi$ and any finite set $P$ of propositional variables, there exists a formula $\theta$ satisfying the following properties: 
\begin{enumerate}	
	\item $v(\theta) \subseteq v(\varphi) \setminus P$; 
	\item $L \vdash \varphi \to \theta$; 
	\item for all formulas $\psi$, if $v(\psi) \cap P = \emptyset$ and $L \vdash \varphi \to \psi$, then $L \vdash \theta \to \psi$. 
\end{enumerate}
Such a formula $\theta$ is said to be a {\it uniform interpolant} of $(\varphi, P)$ in $L$. 
\end{defn}

At last, we introduce uniform Lyndon interpolation property (ULIP) which is the main subject of this paper. 

\begin{defn}\label{Def:ULIP}
We say a logic $L$ enjoys the {\it uniform Lyndon interpolation property} (ULIP) if for any formula $\varphi$ and any finite sets $P, Q$ of propositional variables, there exists a formula $\theta$ satisfying the following properties: 
\begin{enumerate}
	\item $v^+(\theta) \subseteq v^+(\varphi) \setminus P$; 
	\item $v^-(\theta) \subseteq v^-(\varphi) \setminus Q$; 
	\item $L \vdash \varphi \to \theta$; 
	\item for all formulas $\psi$, if $v^+(\psi) \cap P = v^-(\psi) \cap Q = \emptyset$ and $L \vdash \varphi \to \psi$, then $L \vdash \theta \to \psi$. 
\end{enumerate}
Such a formula $\theta$ is said to be a {\it uniform Lyndon interpolant} of $(\varphi, P, Q)$ in $L$. 
\end{defn}

\begin{rem}
An interpolant $\theta$ defined in Definition \ref{Def:ULIP} is sometimes called a {\it post-interpolant} because it is an interpolant concerning formulas implied by $\varphi$. 
If $L$ enjoys ULIP, then pre-interpolants also exist. 
In fact, for a uniform Lyndon interpolant $\theta$ of $(\neg \varphi, Q, P)$, $\neg \theta$ is a {\it pre-interpolant} of $(\varphi, P, Q)$ in $L$ with respect to ULIP. 
That is,  
\begin{enumerate}
	\item $v^+(\neg \theta) \subseteq v^+(\varphi) \setminus P$; 
	\item $v^-(\neg \theta) \subseteq v^-(\varphi) \setminus Q$; 
	\item $L \vdash \neg \theta \to \varphi$; 
	\item for all formulas $\psi$, if $v^+(\psi) \cap P = v^-(\psi) \cap Q = \emptyset$ and $L \vdash \psi \to \varphi$, then $L \vdash \psi \to \neg \theta$. 
\end{enumerate}

\end{rem}

We show that ULIP is in fact stronger than both UIP and LIP. 

\begin{prop}\label{ULIP}
If a logic $L$ enjoys ULIP, then $L$ also enjoys both UIP and LIP. 
\end{prop}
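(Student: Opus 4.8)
The plan is to derive UIP and LIP separately, in each case by a judicious choice of the finite sets $P$ and $Q$ appearing in the hypothesis ULIP, and then to check that the polarity conditions in ULIP's clause (4) specialize to exactly what UIP and LIP demand.

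For UIP, given $\varphi$ and a finite set $P$, I would apply ULIP to the triple $(\varphi, P, P)$, taking both excluded sets equal to $P$. Writing $\theta$ for the resulting uniform Lyndon interpolant, clauses (1) and (2) of ULIP give $v^+(\theta) \subseteq v^+(\varphi) \setminus P$ and $v^-(\theta) \subseteq v^-(\varphi) \setminus P$, whence $v(\theta) = v^+(\theta) \cup v^-(\theta) \subseteq v(\varphi) \setminus P$, which is clause (1) of UIP. Clause (3) of ULIP is clause (2) of UIP verbatim. For clause (3) of UIP, given $\psi$ with $v(\psi) \cap P = \emptyset$ and $L \vdash \varphi \to \psi$, I note that $v^+(\psi), v^-(\psi) \subseteq v(\psi)$ both miss $P$, so the hypotheses of ULIP's clause (4) are met (with $Q = P$), yielding $L \vdash \theta \to \psi$.

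For LIP, given $\varphi$ and $\psi$ with $L \vdash \varphi \to \psi$, the idea is to use ULIP to delete from the interpolant exactly those variables of $\varphi$ that do not occur with the matching polarity in $\psi$. Concretely I would set $P = v^+(\varphi) \setminus v^+(\psi)$ and $Q = v^-(\varphi) \setminus v^-(\psi)$; both are finite since $\varphi$ is a finite formula, so ULIP applies to $(\varphi, P, Q)$. For the resulting $\theta$, clause (1) of ULIP gives $v^+(\theta) \subseteq v^+(\varphi) \setminus P = v^+(\varphi) \cap v^+(\psi)$, and clause (2) gives $v^-(\theta) \subseteq v^-(\varphi) \cap v^-(\psi)$, which are precisely clauses (1) and (2) of LIP; clause (3) of ULIP is clause (3) of LIP. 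It remains to obtain $L \vdash \theta \to \psi$: since $v^+(\psi) \cap P = v^+(\psi) \cap (v^+(\varphi) \setminus v^+(\psi)) = \emptyset$ and likewise $v^-(\psi) \cap Q = \emptyset$, the formula $\psi$ satisfies the hypothesis of ULIP's clause (4), and so $L \vdash \theta \to \psi$ follows.

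I do not expect a genuine obstacle: the whole argument is a matter of instantiating the two excluded-variable parameters correctly and tracking the polarity bookkeeping. The only point demanding a little care is confirming, in the LIP case, that the two emptiness conditions in clause (4) hold for $\psi$ itself, which is immediate from $A \cap (B \setminus A) = \emptyset$; and, in the UIP case, recalling that $v(\psi) \cap P = \emptyset$ already forces both $v^+(\psi) \cap P$ and $v^-(\psi) \cap Q$ to be empty once $Q$ is taken equal to $P$.
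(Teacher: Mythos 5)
Your proposal is correct and follows essentially the same argument as the paper: for UIP you instantiate ULIP at $(\varphi, P, P)$, and for LIP you choose $P = v^+(\varphi) \setminus v^+(\psi)$ and $Q = v^-(\varphi) \setminus v^-(\psi)$, exactly as the paper does. The only difference is that you spell out the polarity bookkeeping that the paper dismisses as ``easy to see,'' which is harmless.
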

\begin{proof}
Suppose that $L$ enjoys ULIP. 

(UIP): Let $\varphi$ be any formula and $P$ be any finite set of propositional variables. 
It is easy to see that a uniform Lyndon interpolant of $(\varphi, P, P)$ in $L$ is a uniform interpolant of $(\varphi, P)$ in $L$. 

(LIP): We prove the LIP of $L$. 
Suppose $L \vdash \varphi \to \psi$. 
For $P = v^+(\varphi) \setminus v^+(\psi)$ and $Q = v^-(\varphi) \setminus v^-(\psi)$, let $\theta$ be a uniform Lyndon interpolant of $(\varphi, P, Q)$ in $L$. 
Then $v^+(\theta) \subseteq v^+(\varphi) \setminus P = v^+(\varphi) \cap v^+(\psi)$, $v^-(\theta) \subseteq v^-(\varphi) \setminus Q = v^-(\varphi) \cap v^-(\psi)$ and $L \vdash \varphi \to \theta$. 
Since $v^+(\psi) \cap P = v^-(\psi) \cap Q = \emptyset$, we obtain $L \vdash \theta \to \psi$. 
Therefore $\theta$ is a Lyndon interpolant of $\varphi \to \psi$ in $L$. 
\qed\end{proof}

From this proposition, we can show that a logic $L$ does not have ULIP if $L$ fails to have either UIP or LIP. 
Ghilardi and Zawadowski \cite{GZ95} proved that ${\bf S4}$ does not possess UIP. 
From their result, B\'ilkov\'a \cite{Bil07} derived that ${\bf K4}$ does not have UIP by considering the translation $\star$. 
The following proposition shows a connection between ULIP and the translation $\star$. 

\begin{prop}\label{ST}
Let $L_0$ and $L_1$ be any logics. 
If $L_0 \subseteq L_1 = L_0^\star$ and $L_0$ enjoys ULIP, then $L_1$ also enjoys ULIP. 
\end{prop}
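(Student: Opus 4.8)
The plan is to show that a single formula delivered by the ULIP of $L_0$, applied not to $\varphi$ but to its translation $\varphi^\star$, serves verbatim as the required uniform Lyndon interpolant for $L_1$. Everything rests on two preliminary facts. First, the translation $\star$ preserves polarities of variables: a routine induction on $\varphi$ gives $v^+(\varphi^\star) = v^+(\varphi)$ and $v^-(\varphi^\star) = v^-(\varphi)$, the only interesting clause being $(\Box\psi)^\star \equiv \psi^\star \land \Box\psi^\star$, where both occurrences of $\psi^\star$ are positive and so exactly reproduce the polarities of $\psi$. Second, and crucially, $L_1 = L_0^\star$ is reflexive: since $(\Box p \to p)^\star \equiv (p \land \Box p) \to p$ is a tautology and hence lies in $L_0$, the definition of $L_0^\star$ yields $L_1 \vdash \Box p \to p$, and by uniform substitution $L_1 \vdash \Box\alpha \to \alpha$ for every formula $\alpha$. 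From this I would derive the key equivalence $L_1 \vdash \alpha \leftrightarrow \alpha^\star$ for all $\alpha$ by induction: the $\Box$ case combines the induction hypothesis and normality, giving $L_1 \vdash \Box\alpha \leftrightarrow \Box\alpha^\star$, with reflexivity, which collapses $\Box\alpha^\star$ to $\alpha^\star \land \Box\alpha^\star \equiv (\Box\alpha)^\star$.

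With these in hand, fix $\varphi$ and finite sets $P, Q$, and apply the ULIP of $L_0$ to the triple $(\varphi^\star, P, Q)$ to obtain a formula $\theta$ satisfying $v^+(\theta) \subseteq v^+(\varphi^\star)\setminus P$, $v^-(\theta) \subseteq v^-(\varphi^\star)\setminus Q$, $L_0 \vdash \varphi^\star \to \theta$, and the post-interpolation clause for $L_0$. I claim this same $\theta$ witnesses ULIP of $(\varphi, P, Q)$ in $L_1$. The two polarity conditions are immediate from the first preliminary fact, since $v^+(\varphi^\star)=v^+(\varphi)$ and $v^-(\varphi^\star)=v^-(\varphi)$. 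For the provability $L_1 \vdash \varphi \to \theta$: from $L_0 \vdash \varphi^\star \to \theta$ and the hypothesis $L_0 \subseteq L_1$ we get $L_1 \vdash \varphi^\star \to \theta$, and composing with $L_1 \vdash \varphi \to \varphi^\star$ (the key equivalence) yields $L_1 \vdash \varphi \to \theta$.

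The remaining universal condition is where the translation must be threaded carefully. Suppose $\psi$ satisfies $v^+(\psi)\cap P = v^-(\psi)\cap Q = \emptyset$ and $L_1 \vdash \varphi \to \psi$. Since $L_1 = L_0^\star$, this means $L_0 \vdash (\varphi \to \psi)^\star \equiv \varphi^\star \to \psi^\star$. I would then feed $\psi^\star$ into the post-interpolation clause of $L_0$: by polarity preservation $\psi^\star$ has the same variable polarities as $\psi$, so $v^+(\psi^\star)\cap P = v^-(\psi^\star)\cap Q = \emptyset$, and together with $L_0 \vdash \varphi^\star \to \psi^\star$ the clause gives $L_0 \vdash \theta \to \psi^\star$. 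Using $L_0 \subseteq L_1$ again we get $L_1 \vdash \theta \to \psi^\star$, and composing with $L_1 \vdash \psi^\star \to \psi$ (the key equivalence) finally yields $L_1 \vdash \theta \to \psi$, as required.

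I expect the main obstacle to be the recognition that $L_1 = L_0^\star$ is automatically reflexive and hence validates $L_1 \vdash \alpha \leftrightarrow \alpha^\star$; this is the one nonformal insight, after which the argument is bookkeeping with $\star$. The secondary point requiring care is to apply $L_0$'s ULIP to $\varphi^\star$ rather than $\varphi$ and to test the post-interpolation clause against $\psi^\star$ rather than $\psi$. I would also flag that the hypothesis $L_0 \subseteq L_1$ is genuinely needed, as it transports both $L_0$-derivabilities into $L_1$, and that $L_1$ is a normal logic, which is guaranteed since $L_0^\star$ is always normal.
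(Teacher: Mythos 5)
Your proposal is correct and follows essentially the same route as the paper's own proof: obtain $\theta$ as a uniform Lyndon interpolant of $(\varphi^\star, P, Q)$ in $L_0$, use $L_0 \vdash (\Box p \to p)^\star$ to get reflexivity of $L_1$ and hence $L_1 \vdash \alpha \leftrightarrow \alpha^\star$, and verify the post-interpolation clause by testing against $\psi^\star$, relying on the polarity-preservation $v^\pm(\alpha^\star) = v^\pm(\alpha)$. The only difference is that you spell out the routine inductions that the paper leaves implicit.
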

\begin{proof}
Suppose $L_0 \subseteq L_1 = L_0^\star$ and $L_0$ enjoys ULIP. 
Since $L_0 \vdash (\Box p \to p)^\star$, we have $L_1 \vdash \Box p \to p$. 
Then $L_1 \vdash \Box p \leftrightarrow (\Box p)^\star$. 
It follows $L_1 \vdash \varphi \leftrightarrow \varphi^\star$ for all formulas $\varphi$. 

Let $\varphi$ be any formula and $P$, $Q$ be any finite sets of propositional variables. 
Then we obtain a uniform Lyndon interpolant $\theta$ of $(\varphi^\star, P, Q)$ in $L_0$. 
Since $L_0 \subseteq L_1$, $L_1 \vdash \varphi^\star \to \theta$ and hence $L_1 \vdash \varphi \to \theta$. 
Also $v^\circ(\theta) \subseteq v^\circ(\varphi^\star) = v^\circ(\varphi)$ for $\circ \in \{+, -\}$. 
Let $\psi$ be any formula with $L_1 \vdash \varphi \to \psi$ and $v^+(\psi) \cap P = v^-(\psi) \cap Q = \emptyset$. 
Then $L_0 \vdash \varphi^\star \to \psi^\star$. 
By the choice of $\theta$, $L_0 \vdash \theta \to \psi^\star$ because $v^\circ(\psi^\star) = v^\circ(\psi)$ for $\circ \in \{+, -\}$. 
Then $L_1 \vdash \theta \to \psi^\star$ and hence $L_1 \vdash \theta \to \psi$. 
We conclude that $\theta$ is a uniform Lyndon interpolant of $(\varphi, P, Q)$ in $L_1$. 
\qed\end{proof}

\begin{cor}
${\bf K4}$, ${\bf KD4}$ and ${\bf S4}$ do not enjoy ULIP. 
Moreover, if ${\bf K4} \subseteq L \subseteq {\bf S4}$, then $L$ does not enjoy ULIP. 
\end{cor}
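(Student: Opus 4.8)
The plan is to derive everything from the failure of UIP for ${\bf S4}$, combined with Propositions \ref{ULIP} and \ref{ST}. First I would record that ${\bf S4}$ does not enjoy ULIP: Ghilardi and Zawadowski proved that ${\bf S4}$ lacks UIP, and Proposition \ref{ULIP} states that ULIP implies UIP, so the contrapositive immediately gives that ${\bf S4}$ fails ULIP. This single input is all I need from the literature.

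Next I would prove the general statement, since it subsumes the three named logics. Fix any $L$ with ${\bf K4} \subseteq L \subseteq {\bf S4}$. The crucial step is the identity $L^\star = {\bf S4}$. The translation $\star$ is monotone, that is $L_0 \subseteq L_1$ implies $L_0^\star \subseteq L_1^\star$, directly from the definition $L^\star = \{\varphi : L \vdash \varphi^\star\}$. Applying this to ${\bf K4} \subseteq L \subseteq {\bf S4}$ yields ${\bf K4}^\star \subseteq L^\star \subseteq {\bf S4}^\star$. By the Example we have ${\bf K4}^\star = {\bf S4}$, and ${\bf S4}^\star = {\bf S4}$ because ${\bf S4} \vdash \Box p \to p$: the argument inside the proof of Proposition \ref{ST} shows that this entails ${\bf S4} \vdash \varphi \leftrightarrow \varphi^\star$ for all $\varphi$, whence ${\bf S4}^\star = \{\varphi : {\bf S4} \vdash \varphi^\star\} = {\bf S4}$. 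Therefore ${\bf S4} \subseteq L^\star \subseteq {\bf S4}$, so $L^\star = {\bf S4}$.

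Finally I would invoke Proposition \ref{ST} with $L_0 = L$ and $L_1 = {\bf S4}$. Its hypothesis $L_0 \subseteq L_1 = L_0^\star$ is exactly $L \subseteq {\bf S4} = L^\star$, which now holds. Consequently, if $L$ enjoyed ULIP, Proposition \ref{ST} would force ${\bf S4}$ to enjoy ULIP, contradicting the first step; hence $L$ does not enjoy ULIP. The first sentence of the corollary then follows as a special case, since ${\bf K4}$, ${\bf KD4}$ and ${\bf S4}$ all satisfy ${\bf K4} \subseteq L \subseteq {\bf S4}$, where for ${\bf KD4} \subseteq {\bf S4}$ I would note that ${\bf S4} \vdash \Box p \to p$ gives ${\bf S4} \vdash \neg \Box \bot$.

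The main obstacle is establishing $L^\star = {\bf S4}$ for the intermediate logic $L$; once monotonicity of $\star$ and the two endpoint computations ${\bf K4}^\star = {\bf S4}$ and ${\bf S4}^\star = {\bf S4}$ are in place, the contradiction through Proposition \ref{ST} is routine. Everything else is bookkeeping about which logics lie in the interval between ${\bf K4}$ and ${\bf S4}$.
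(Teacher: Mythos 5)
Your proof is correct and takes essentially the same approach as the paper: the paper's proof simply asserts that ${\bf K4} \subseteq L \subseteq {\bf S4}$ implies $L^\star = {\bf S4}$ and then cites Ghilardi and Zawadowski's result together with Propositions \ref{ULIP} and \ref{ST}, which is exactly your chain of reasoning. What you add is the verification the paper leaves to the reader, namely that $L^\star = {\bf S4}$ follows from monotonicity of $\star$ and the endpoint computations ${\bf K4}^\star = {\bf S4}$ and ${\bf S4}^\star = {\bf S4}$, and this verification is sound.
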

\begin{proof}
It can be shown that if ${\bf K4} \subseteq L \subseteq {\bf S4}$, then $L^\star = {\bf S4}$. 
Then this corollary follows from Ghilardi and Zawadowski's result and Propositions \ref{ULIP} and \ref{ST}.
\qed\end{proof}

Next, we show that for logics satisfying the local tabularity, ULIP is nothing but LIP. 

\begin{defn}\label{LT}(See \cite{CZ97})
A logic $L$ is said to be {\it locally tabular} if for any finite set $R$ of propositional variables, there are only finitely many formulas built from variables in $R$ up to $L$-provable equivalence. 
\end{defn}

Of course, every extension of a locally tabular logic is also locally tabular. 

\begin{prop}
If $L$ is locally tabular and enjoys LIP, then $L$ also enjoys ULIP. 
\end{prop}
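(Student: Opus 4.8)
The plan is to realize a uniform Lyndon interpolant as a finite conjunction of ordinary Lyndon interpolants, using local tabularity to collapse an a priori infinite family of candidates down to finitely many representatives.

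First I would fix a formula $\varphi$ and finite sets $P, Q$, and call a formula $\psi$ \emph{admissible} if $v^+(\psi) \cap P = v^-(\psi) \cap Q = \emptyset$ and $L \vdash \varphi \to \psi$. For each admissible $\psi$, LIP applied to $\varphi \to \psi$ yields a Lyndon interpolant $\theta_\psi$ with $v^+(\theta_\psi) \subseteq v^+(\varphi) \cap v^+(\psi)$, $v^-(\theta_\psi) \subseteq v^-(\varphi) \cap v^-(\psi)$, $L \vdash \varphi \to \theta_\psi$ and $L \vdash \theta_\psi \to \psi$. The crucial observation is that $\theta_\psi$ already satisfies the sign conditions demanded of a uniform Lyndon interpolant: since $v^+(\psi) \cap P = \emptyset$ we get $v^+(\theta_\psi) \subseteq v^+(\varphi) \cap v^+(\psi) \subseteq v^+(\varphi) \setminus P$, and symmetrically $v^-(\theta_\psi) \subseteq v^-(\varphi) \setminus Q$.

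Next I would invoke local tabularity. Every $\theta_\psi$ has all of its variables inside the finite set $v(\varphi)$, so by local tabularity the family $\{\theta_\psi : \psi \text{ admissible}\}$ meets only finitely many $L$-provable-equivalence classes. The family is nonempty because $\top$ is admissible (as $v^+(\top) = v^-(\top) = \emptyset$ and $L \vdash \varphi \to \top$). Choosing representatives $\theta_1, \dots, \theta_n$ of these classes, I set $\theta := \theta_1 \land \cdots \land \theta_n$. Verifying the four clauses of ULIP is then routine: using $v^+(\alpha \land \beta) = v^+(\alpha) \cup v^+(\beta)$ and the dual identity for $v^-$, the formula $\theta$ inherits the sign conditions $v^+(\theta) \subseteq v^+(\varphi) \setminus P$ and $v^-(\theta) \subseteq v^-(\varphi) \setminus Q$; clause 3 follows since $L \vdash \varphi \to \theta_i$ for each $i$; and for clause 4, given admissible $\psi$ its interpolant $\theta_\psi$ is $L$-equivalent to some $\theta_i$, whence $L \vdash \theta \to \theta_i$ and $L \vdash \theta_i \to \psi$ give $L \vdash \theta \to \psi$.

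I expect the main obstacle to be the finiteness step rather than the verification: a priori there are infinitely many admissible $\psi$, and one must be careful to argue that their Lyndon interpolants all live in the fixed finite variable set $v(\varphi)$, so that local tabularity can cut them down to finitely many equivalence classes. The sign bookkeeping—confirming that the Lyndon interpolant of an admissible implication automatically avoids $P$ positively and $Q$ negatively—is conceptually the linchpin but technically light, and it is precisely what upgrades LIP (which only knows about the antecedent--consequent overlap) to the uniform, $P,Q$-indexed statement.
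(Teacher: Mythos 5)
Your proof is correct and follows essentially the same route as the paper's: both exploit local tabularity to reduce the (a priori infinite) family of suitable formulas implied by $\varphi$ to finitely many $L$-equivalence classes, take the conjunction of representatives, and then use LIP together with the sign bookkeeping $v^+(\theta_\psi) \subseteq v^+(\varphi)\cap v^+(\psi) \subseteq v^+(\varphi)\setminus P$ (and dually for $v^-$, $Q$) to verify the universal clause. The only cosmetic difference is that the paper conjoins representatives of \emph{all} suitable formulas implied by $\varphi$, whereas you conjoin representatives drawn from the Lyndon interpolants themselves; the argument is otherwise identical.
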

\begin{proof}
Suppose that $L$ is locally tabular and enjoys LIP. 
Let $\varphi$ be any formula and $P, Q$ be any finite sets of propositional variables. 
For $R = v(\varphi)$, there exists a finite set $S_R$ of formulas built from variables in $R$ such that for all formulas $\psi$ with $v(\psi) \subseteq R$, there exists a formula $\delta \in S_R$ such that $L \vdash \delta \leftrightarrow \psi$. 
In this proof, we temporarily say that a formula $\xi$ is suitable if $v^+(\xi) \subseteq v^+(\varphi) \setminus P$ and $v^-(\xi) \subseteq v^-(\varphi) \setminus Q$. 

Let $\delta_0, \ldots, \delta_{k}$ be all the elements of the finite set
\[
	\{\delta \in S_R : L \vdash \delta \leftrightarrow \xi\ \text{for some suitable formula}\ \xi\ \text{with}\ L \vdash \varphi \to \xi\}. 
\]
For each $i \leq k$, let $\xi_i$ be a suitable formula with $L \vdash \delta_i \leftrightarrow \xi_i$. 
Let 
\[
	\theta \equiv \bigwedge_{i \leq k} \xi_i. 
\]
Then $\theta$ is also suitable and $L \vdash \varphi \to \theta$. 

Let $\psi$ be any formula with $v^+(\psi) \cap P = v^-(\psi) \cap Q = \emptyset$ and $L \vdash \varphi \to \psi$. 
Since $L$ enjoys LIP, we obtain a Lyndon interpolant $\xi$ of $\varphi \to \psi$ in $L$. 
%We may assume $\xi \in S_R$ because $v(\xi) \subseteq R$. 
Since $\xi$ is suitable and $L \vdash \varphi \to \xi$, $\xi$ is $L$-equivalent to $\delta_i$ for some $i \leq k$ because of the local tabularity of $L$. 
Then $\xi$ is also $L$-equivalent to $\xi_i$. 
Since $\xi_i$ is a conjunct of $\theta$, we obtain $L \vdash \theta \to \xi$. 
Since $L \vdash \xi \to \psi$, we conclude $L \vdash \theta \to \psi$. 
Therefore $\theta$ is a uniform Lyndon interpolant of $(\varphi, P, Q)$ in $L$. 
We have proved the ULIP of $L$. 
\qed\end{proof}

Nagle and Thomason \cite{NT85} proved that ${\bf K5}$ is locally tabular. 
The logics ${\bf K5}$, ${\bf KD5}$, ${\bf K45}$, ${\bf KD45}$, ${\bf KB5}$ and ${\bf S5}$ are extensions of ${\bf K5}$, and LIP for these logics are proved by Kuznets \cite{Kuz16}. 
Then we obtain the following corollary. 

\begin{cor}
For any extension of ${\bf K5}$, the LIP and ULIP are equivalent. 
In particular, ${\bf K5}$, ${\bf KD5}$, ${\bf K45}$, ${\bf KD45}$, ${\bf KB5}$ and ${\bf S5}$ enjoy ULIP. 
\end{cor}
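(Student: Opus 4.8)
The plan is to prove the two directions of the equivalence separately and then read off the particular cases from known LIP results. The direction from ULIP to LIP needs no special hypothesis at all: Proposition~\ref{ULIP} already shows that every logic enjoying ULIP also enjoys LIP, so this holds in particular for every extension of ${\bf K5}$.

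For the converse direction, the key point is that every extension of ${\bf K5}$ is locally tabular. Nagle and Thomason established that ${\bf K5}$ itself is locally tabular, and since local tabularity is inherited by all extensions (as noted just after Definition~\ref{LT}), any $L$ with ${\bf K5} \subseteq L$ is locally tabular. I would then invoke the immediately preceding proposition, which says that a locally tabular logic enjoying LIP also enjoys ULIP. Thus, for such an $L$, LIP implies ULIP, and together with the first direction this gives the desired equivalence.

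For the ``in particular'' clause, I would observe that each of the six named logics is a normal extension of ${\bf K5}$ (immediate from their defining axioms, each of which includes the axiom $\Diamond p \to \Box \Diamond p$ of ${\bf K5}$) and recall that Kuznets proved LIP for each of them. Applying the equivalence just established then yields ULIP for all six. I do not anticipate any genuine obstacle, since the corollary is merely an assembly of Proposition~\ref{ULIP}, the local-tabularity-plus-LIP proposition, the Nagle--Thomason local tabularity of ${\bf K5}$, and Kuznets's LIP results; the only routine verification is that the listed logics are genuinely extensions of ${\bf K5}$, which is transparent from the axiomatizations.
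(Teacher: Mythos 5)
Your proof is correct and follows essentially the same route as the paper: the paper derives this corollary directly from the Nagle--Thomason local tabularity of ${\bf K5}$, the inheritance of local tabularity by extensions, the preceding proposition (locally tabular $+$ LIP implies ULIP), Proposition~\ref{ULIP} for the converse, and Kuznets's LIP results for the six named logics. Your assembly of these ingredients, including the check that each listed logic contains the axiom $\Diamond p \to \Box \Diamond p$, matches the paper's intended argument exactly.
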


We say a formula $\varphi$ is {\it constant} if $v(\varphi) = \emptyset$. 
Rautenberg \cite{Rau83} proved that every extension of a modal logic with constant formulas preserves CIP. 
This is also the case for ULIP. 

\begin{prop}\label{CE}
Let $X$ be a set of constant formulas. 
If $L$ enjoys ULIP, then $L + X$ also enjoys ULIP. 
\end{prop}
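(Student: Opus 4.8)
The plan is to reduce provability in $L + X$ to provability in $L$ by absorbing the constant axioms through a deduction theorem, and then to show that the very uniform Lyndon interpolant computed in $L$ already works for $L + X$. The crucial point is that members of $X$ are constant, so any finite conjunction of their boxed instances is again a constant formula and therefore contributes nothing to $v^+$ or $v^-$. Concretely, I would first record the following deduction theorem for normal modal logics: for any formula $\alpha$, $L + X \vdash \alpha$ holds if and only if there are $\chi_1, \dots, \chi_m \in X$ and an $n$ such that, writing $\Gamma \equiv \bigwedge_{i \le m} \bigwedge_{j \le n} \Box^j \chi_i$ (with $\Box^0 \beta \equiv \beta$ and $\Box^{j+1}\beta \equiv \Box \Box^j \beta$), we have $L \vdash \Gamma \to \alpha$. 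The right-to-left direction is immediate, since each $\Box^j \chi_i$ is provable in $L + X$ by necessitation from $\chi_i \in X$, so $L + X \vdash \Gamma$, and $L \subseteq L + X$ is closed under modus ponens. The converse is proved by induction on the length of a derivation in $L + X$, the only nontrivial case being necessitation, which is handled by increasing the index $n$; note that since each $\chi_i$ is constant, substitution instances of $\chi_i$ are just $\chi_i$ itself, so closure under uniform substitution causes no difficulty. Finally, $\Gamma$ is constant, hence $v^+(\Gamma) = v^-(\Gamma) = \emptyset$.

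Now, given a formula $\varphi$ and finite sets $P, Q$ of propositional variables, I would let $\theta$ be a uniform Lyndon interpolant of $(\varphi, P, Q)$ in $L$, which exists because $L$ enjoys ULIP. Conditions (1) and (2) of Definition \ref{Def:ULIP} for $\theta$ in $L + X$ are literally the same as in $L$, and condition (3), namely $L \vdash \varphi \to \theta$, gives $L + X \vdash \varphi \to \theta$ at once since $L \subseteq L + X$.

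The remaining work is condition (4), and this is where the deduction theorem does its job. Suppose $\psi$ satisfies $v^+(\psi) \cap P = v^-(\psi) \cap Q = \emptyset$ and $L + X \vdash \varphi \to \psi$. By the deduction theorem there is a constant $\Gamma$ as above with $L \vdash \Gamma \to (\varphi \to \psi)$, that is, $L \vdash \varphi \to (\Gamma \to \psi)$. Because $\Gamma$ is constant we have $v^+(\Gamma \to \psi) = v^+(\psi)$ and $v^-(\Gamma \to \psi) = v^-(\psi)$, so $v^+(\Gamma \to \psi) \cap P = v^-(\Gamma \to \psi) \cap Q = \emptyset$. Applying the defining property of the uniform Lyndon interpolant $\theta$ in $L$ then yields $L \vdash \theta \to (\Gamma \to \psi)$, i.e.\ $L \vdash (\Gamma \land \theta) \to \psi$. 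Since $L + X \vdash \Gamma$, we conclude $L + X \vdash \theta \to \psi$, so $\theta$ is a uniform Lyndon interpolant of $(\varphi, P, Q)$ in $L + X$.

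I expect the main obstacle to be the precise statement and verification of the deduction theorem, in particular getting the necessitation step right via the master-modality conjunction $\bigwedge_{j \le n}\Box^j \chi_i$. Everything afterwards is routine bookkeeping with the positive and negative variable sets, which goes through smoothly precisely because the absorbed axioms are constant and hence invisible to $v^+$ and $v^-$.
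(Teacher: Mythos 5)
Your proof is correct and follows essentially the same route as the paper: take the interpolant $\theta$ computed in $L$, absorb the axioms from $X$ into a single constant formula $\chi$ (your $\Gamma$) with $L \vdash \chi \to (\varphi \to \psi)$, and use the constancy of $\chi$ to see that $\chi \to \psi$ has the same positive and negative variables as $\psi$, so that the interpolation property of $\theta$ in $L$ applies. The only difference is presentational: the paper obtains $\chi$ by a brief induction on proof length, whereas you make its shape explicit via the deduction theorem with the conjunction $\bigwedge_{i,j}\Box^j\chi_i$, which is a reasonable way to fill in that same step.
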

\begin{proof}
Suppose that $L$ has ULIP. 
Let $\varphi$ be any formula and let $P,Q$ be any finite sets of propositional variables. 
Then we obtain a uniform Lyndon interpolant $\theta$ of $(\varphi, P, Q)$ in $L$. 
We show that $\theta$ is also a uniform Lyndon interpolant of $(\varphi, P, Q)$ in $L + X$. 
Let $\psi$ be any formula with $L + X \vdash \varphi \to \psi$ and $v^+(\psi) \cap P = v^-(\psi) \cap Q = \emptyset$. 
Then by induction on the length of proofs in $L + X$, we can show that there exists a constant formula $\chi$ such that $L + X \vdash \chi$ and $L \vdash \chi \to (\varphi \to \psi)$. 
Since $L \vdash \varphi \to (\chi \to \psi)$ and $v^\circ(\chi \to \psi) = v^\circ(\psi)$ for $\circ \in \{+, -\}$, we obtain $L \vdash \theta \to (\chi \to \psi)$. 
Thus $L + X \vdash \theta \to \psi$. 
\qed\end{proof}

\section{Layered $(P, Q)$-bisimulation}\label{Sec:BBS}

Throughout this section, let $P$ and $Q$ be any finite sets of propositional variables. 
We introduce the notion of layered $(P, Q)$-bisimulation between Kripke models which is a variation of the notion of layered bisimulation in \cite{Vis96} and $n$-bisimulation in \cite{BDV02}. 
We prove some basic facts concerning this notion. 

A tuple $M = (W, \prec, \Vdash)$ is said to be a {\it Kripke model} if $W$ is a non-empty set, $\prec$ is a binary relation on $W$, and $\Vdash$ is a binary relation between $W$ and the set of all formulas satisfying the usual conditions for satisfaction with the following additional condition: $x \Vdash \Box \varphi$ if and only if for all $y \in W$, $y \Vdash \varphi$ if $x \prec y$. 
We say a formula $\varphi$ is {\it valid} in $M$ if $x \Vdash \varphi$ for all $x \in W$. 

\begin{defn}
A formula $\varphi$ is said to be a {\it $(P, Q)$-formula} if $v^+(\varphi) \subseteq P$ and $v^-(\varphi) \subseteq Q$. 
\end{defn}

\begin{prop}
For each $n \in \omega$, there exists a finite set $F_n^{(P, Q)}$ of $(P, Q)$-formulas with modal depth $\leq n$ such that for all $(P, Q)$-formulas $\psi$ with $d(\psi) \leq n$, there exists $\varphi \in F_n^{(P, Q)}$ such that $\K \vdash \varphi \leftrightarrow \psi$. 
\end{prop}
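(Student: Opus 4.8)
The plan is to prove this by induction on $n$, constructing the finite sets $F_n^{(P,Q)}$ explicitly and showing that every $(P,Q)$-formula of modal depth at most $n$ is $\K$-equivalent to some member. The key conceptual point is that, up to $\K$-provable equivalence, there are only finitely many formulas of bounded modal depth using variables drawn from a fixed finite pool, because at each depth the Boolean structure over the available ``atoms'' is finitely generated.

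\medskip

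First I would handle the base case $n=0$. A $(P,Q)$-formula of modal depth $0$ contains no $\Box$, so it is built from $\bot$ and the propositional variables by means of $\to$. Since only finitely many variables can occur (those in $P \cup Q$), such a formula is a Boolean combination of finitely many atoms, and classical propositional logic (which $\K$ contains) has only finitely many formulas over finitely many atoms up to provable equivalence. I would set $F_0^{(P,Q)}$ to be a finite set of representatives, one for each equivalence class, taking care that each representative is itself a genuine $(P,Q)$-formula (this is possible since equivalence preserves the polarity constraints witnessed by the given formula, or one may simply re-pick representatives realizing the correct positive/negative occurrences). For the inductive step, suppose $F_n^{(P,Q)}$ has been constructed. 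Given a $(P,Q)$-formula $\psi$ with $d(\psi) \le n+1$, I would observe that $\psi$ is a Boolean combination (built using $\to$ and $\bot$) of variables from $P \cup Q$ together with boxed subformulas $\Box\chi$ where each $\chi$ is a $(P,Q)$-formula of modal depth $\le n$. By the induction hypothesis each such $\chi$ is $\K$-equivalent to some $\chi' \in F_n^{(P,Q)}$, and since $\K$ proves $\chi \leftrightarrow \chi'$ implies $\Box\chi \leftrightarrow \Box\chi'$ (by necessitation and normality), I may replace each boxed subformula by a box of an element of the finite set $F_n^{(P,Q)}$. Thus $\psi$ is $\K$-equivalent to a Boolean combination of atoms drawn from the finite set $(P\cup Q) \cup \{\,\Box\chi' : \chi' \in F_n^{(P,Q)}\,\}$. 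Again invoking the finiteness of Boolean combinations over finitely many atoms, I collect a finite set of representatives and define $F_{n+1}^{(P,Q)}$ accordingly.

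\medskip

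The main subtlety, and the step I expect to require the most care, is maintaining the polarity conditions $v^+(\varphi) \subseteq P$ and $v^-(\varphi) \subseteq Q$ on the chosen representatives. When I pass to a $\K$-equivalent representative of a Boolean class, I must ensure the representative is itself a $(P,Q)$-formula, rather than merely being equivalent to one. I would address this by defining $F_{n+1}^{(P,Q)}$ not as arbitrary class representatives but as a set obtained by enumerating explicit $(P,Q)$-formulas: for instance, all formulas of the prescribed syntactic shape over the finite atom set, truncated to a suitable bound, and then thinned out to one per $\K$-equivalence class while retaining only genuine $(P,Q)$-formulas. Since the original $\psi$ witnesses that its own equivalence class is populated by a $(P,Q)$-formula, such a representative always exists, so no class we need is lost. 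Closing the induction then yields the desired $F_n^{(P,Q)}$ for every $n$.
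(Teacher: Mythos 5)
Your overall strategy (induction on $n$, finitely many Boolean combinations over a finite atom set, with care in choosing representatives) is the intended one --- the paper itself only remarks that the proposition ``is easily proved by induction on $n$''. But your inductive step contains a genuine error in the polarity bookkeeping. You claim that a $(P,Q)$-formula $\psi$ with $d(\psi)\le n+1$ is a Boolean combination of variables in $P\cup Q$ together with boxed formulas $\Box\chi$ where \emph{each} $\chi$ is a $(P,Q)$-formula of depth $\le n$. This is false: since $v^+(\Box\chi)=v^+(\chi)$ and $v^-(\Box\chi)=v^-(\chi)$, a maximal boxed subformula occurring \emph{negatively} in $\psi$ has its body $\chi$ being a $(Q,P)$-formula, not a $(P,Q)$-formula. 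Concretely, take $P=\emptyset$, $Q=\{q\}$ and $\psi\equiv\neg\Box q$. This is a $(P,Q)$-formula of depth $1$, but its boxed subformula's body $q$ is a $(Q,P)$-formula and is not $\K$-equivalent to any $(P,Q)$-formula. Your construction would build $F_1^{(P,Q)}$ from Boolean combinations of the atoms $q$, $\Box\top$, $\Box\bot$, $\Box\neg q$ (where $\{\top,\bot,\neg q\}$ represents $F_0^{(P,Q)}$), and $\neg\Box q$ is not $\K$-equivalent to any such combination: a world whose unique successor satisfies $q$ and a world with two successors, one satisfying $q$ and one not, agree on all four atoms (and on $q$ at the root) but disagree on $\neg\Box q$. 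Hence the equivalence class of $\neg\Box q$ contains no formula that your enumeration ever considers, so your final thinning step cannot recover it --- that class is simply absent from your $F_1^{(P,Q)}$, and the induction breaks.

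The repair is natural but essential: carry out the induction with the statement universally quantified over the pair $(P,Q)$, so that the induction hypothesis provides both $F_n^{(P,Q)}$ and $F_n^{(Q,P)}$. In the inductive step, a $(P,Q)$-formula of depth $\le n+1$ is, up to the replacement of boxed bodies by representatives (using the congruence rule you correctly cite), a Boolean combination in which every positively occurring atom comes from $P\cup\{\Box\chi' : \chi'\in F_n^{(P,Q)}\}$ and every negatively occurring atom comes from $Q\cup\{\Box\chi'' : \chi''\in F_n^{(Q,P)}\}$. There are still only finitely many such polarity-respecting combinations up to $\K$-equivalence, and each one is itself a $(P,Q)$-formula of depth $\le n+1$, which at the same time disposes of the representative-selection worry in your last paragraph: no separate thinning argument is needed once the atoms themselves respect the polarities.
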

\begin{proof}
This is easily proved by induction on $n$. 
\end{proof}

\begin{defn}\label{Th}
Let $M = (W, \prec, \Vdash)$ be any Kripke model. 
For each $w \in W$ and $n \in \omega$, we define a set $\Th_n^{(P, Q)}(w)$ and a formula $C_n^{(P, Q)}(w)$ as follows: 
\begin{enumerate}
	\item $\Th_n^{(P, Q)}(w) = \{\varphi \in F_n^{(P, Q)} : w \Vdash \varphi\}$. 
	\item $C_n^{(P, Q)}(w) \equiv \bigwedge \Th_n^{(P, Q)}(w)$. 
\end{enumerate}
\end{defn}

\begin{prop}\label{CF}
Let $M = (W, \prec, \Vdash)$ and $M' = (W', \prec', \Vdash')$ be any Kripke models. For any $w \in W$, $w' \in W'$ and $n \in \omega$, the following are equivalent: 
\begin{enumerate}
	\item $\Th_n^{(P, Q)}(w) \subseteq \Th_n^{(P, Q)}(w')$. 
	\item $\Th_n^{(Q, P)}(w') \subseteq \Th_n^{(Q, P)}(w)$. 
	\item $w' \Vdash' C_n^{(P, Q)}(w)$. 
	\item $w \Vdash C_n^{(Q, P)}(w')$. 
\end{enumerate}
\end{prop}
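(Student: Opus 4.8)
The plan is to prove Proposition \ref{CF} by establishing a cycle of implications among the four conditions, leaning on the duality between positive and negative occurrences that is already built into the definitions. The key observation driving everything is a \emph{polarity-swap lemma}: for any formula $\chi$, the negation $\neg \chi$ satisfies $v^+(\neg\chi) = v^-(\chi)$ and $v^-(\neg\chi) = v^+(\chi)$, so $\chi$ is a $(P,Q)$-formula if and only if $\neg\chi$ is a $(Q,P)$-formula, and $w \Vdash \chi$ iff $w \not\Vdash \neg\chi$. I expect this to reduce the symmetry between conditions (1)/(3) (phrased with $(P,Q)$) and conditions (2)/(4) (phrased with $(Q,P)$) to bookkeeping about which formulas hold at $w$ versus $w'$.

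First I would prove the equivalence (1) $\Leftrightarrow$ (3), which is the heart of the matter and does not involve the polarity swap. The implication (1) $\Rightarrow$ (3) is immediate: $C_n^{(P,Q)}(w)$ is by definition the conjunction of all $\varphi \in F_n^{(P,Q)}$ with $w \Vdash \varphi$, i.e.\ of all members of $\Th_n^{(P,Q)}(w)$; if $\Th_n^{(P,Q)}(w) \subseteq \Th_n^{(P,Q)}(w')$ then $w'$ satisfies each conjunct, so $w' \Vdash' C_n^{(P,Q)}(w)$. For the converse (3) $\Rightarrow$ (1), take any $\varphi \in \Th_n^{(P,Q)}(w)$; then $\varphi$ is a conjunct of $C_n^{(P,Q)}(w)$, so from $w' \Vdash' C_n^{(P,Q)}(w)$ we get $w' \Vdash' \varphi$, i.e.\ $\varphi \in \Th_n^{(P,Q)}(w')$. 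So this equivalence follows directly from the definition of $C_n^{(P,Q)}$ as a finite conjunction, with no serious obstacle.

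Next I would handle (1) $\Leftrightarrow$ (2) via the polarity swap. The idea is that $\Th_n^{(P,Q)}(w) \subseteq \Th_n^{(P,Q)}(w')$ should correspond, after negating, to $\Th_n^{(Q,P)}(w') \subseteq \Th_n^{(Q,P)}(w)$ (note the reversed inclusion). The mild technical wrinkle here is that $\Th_n^{(P,Q)}(w)$ is defined using the fixed finite representative set $F_n^{(P,Q)}$, not the set of \emph{all} $(P,Q)$-formulas true at $w$; so I first want to restate condition (1) in the representative-free form ``every $(P,Q)$-formula of depth $\leq n$ true at $w$ is true at $w'$,'' using the representative Proposition to pass between $F_n^{(P,Q)}$ and arbitrary $(P,Q)$-formulas up to $\K$-provable equivalence, which is sound on all Kripke models. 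Once (1) is in that representative-free form, I negate: the $(P,Q)$-formulas of depth $\leq n$ are exactly the negations of the $(Q,P)$-formulas of depth $\leq n$, and ``$w \Vdash \chi \Rightarrow w' \Vdash' \chi$'' becomes ``$w' \not\Vdash' \neg\chi \Rightarrow w \not\Vdash \neg\chi$,'' i.e.\ the contrapositive gives ``$w' \Vdash' \neg\chi \Leftarrow w \Vdash \neg\chi$,'' wait---more carefully, it becomes ``every $(Q,P)$-formula true at $w'$ is true at $w$,'' which is the representative-free form of (2). This contraposition-plus-negation step is the one place I expect to have to be careful, precisely about the direction of the inclusion and about keeping modal depth invariant under negation (which it is, since $d(\neg\chi)=d(\chi\to\bot)=d(\chi)$).

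Finally, (3) $\Leftrightarrow$ (4) follows from (1) $\Leftrightarrow$ (2) by symmetry: condition (4), namely $w \Vdash C_n^{(Q,P)}(w')$, is exactly condition (3) with the roles of $(w,M)$ and $(w',M')$ interchanged and $(P,Q)$ replaced by $(Q,P)$, so it is equivalent to (2) by the same argument that showed (1) $\Leftrightarrow$ (3). Collecting these, I have (2) $\Leftrightarrow$ (1) $\Leftrightarrow$ (3) and (2) $\Leftrightarrow$ (4), which closes the loop and yields the mutual equivalence of all four conditions. The only genuine obstacle is the polarity-swap bookkeeping in the (1) $\Leftrightarrow$ (2) step; the rest is a direct unwinding of the definition of $C_n^{(P,Q)}$ together with the finite-representative Proposition.
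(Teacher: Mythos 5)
Your proposal is correct and takes essentially the same route as the paper: the equivalences $(1 \Leftrightarrow 3)$ and $(2 \Leftrightarrow 4)$ are read off directly from the definition of $C_n^{(P,Q)}(w)$ as the conjunction of $\Th_n^{(P,Q)}(w)$, and $(1 \Leftrightarrow 2)$ comes from the polarity swap $\chi \mapsto \neg\chi$ exchanging $(P,Q)$-formulas and $(Q,P)$-formulas. The paper's proof is just a terser statement of these same two observations; the only thing you add is an explicit treatment of the passage between the representatives in $F_n^{(P,Q)}$ and arbitrary $(P,Q)$-formulas of depth $\leq n$, which the paper leaves implicit.
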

\begin{proof}
The equivalence $(1 \Leftrightarrow 2)$ follows from the fact that $\varphi$ is a $(P, Q)$-formula if and only if $\neg \varphi$ is a $(Q, P)$-formula. 
The equivalences $(1 \Leftrightarrow 3)$ and $(2 \Leftrightarrow 4)$ are direct consequences of Definition \ref{Th}. 
\qed\end{proof}

\begin{defn}
Let $M = (W, \prec, \Vdash)$ and $M' = (W', \prec', \Vdash')$ be any Kripke models. 
We say a relation $Z \subseteq W \times \omega \times W'$ is a {\it layered $(P, Q)$-bisimulation between $M$ and $M'$} if it satisfies the following three conditions: 
\begin{enumerate}
	\item Suppose $(w, n, w') \in Z$. Then
	\begin{itemize}
		\item for any $p \in P$, if $w \Vdash p$, then $w' \Vdash' p$; 
		\item for any $q \in Q$, if $w \nVdash q$, then $w' \nVdash' q$. 
	\end{itemize}
	\item Suppose $(w, n+1, w') \in Z$ and $w \prec x$. 
	Then there exists $x' \in W'$ such that $w' \prec' x'$ and $(x, n, x') \in Z$. 
	\item Suppose $(w, n+1, w') \in Z$ and $w' \prec' x'$. 
	Then there exists $x \in W$ such that $w \prec x$ and $(x, n, x') \in Z$. 
\end{enumerate}
We say a layered $(P, Q)$-bisimulation $Z$ between $M$ and $M'$ is {\it downward closed} if for any $(w, n, w') \in W \times \omega \times W'$, if $(w, n, w') \in Z$, then $(w, m, w') \in Z$ for all $m \leq n$. 
\end{defn}

We prove the main theorem of this section. 

\begin{thm}\label{BBsim}
Let $M = (W, \prec, \Vdash)$ and $M' = (W', \prec', \Vdash')$ be any Kripke models. 
For any $w \in W$, $w' \in W'$ and $n \in \omega$, the following are equivalent: 
\begin{enumerate}
	\item $\Th_n^{(P, Q)}(w) \subseteq \Th_n^{(P, Q)}(w')$. 
	\item There exists a layered $(P, Q)$-bisimulation $Z$ between $M$ and $M'$ such that $(w, n, w') \in Z$. 
	\item There exists a downward closed layered $(P, Q)$-bisimulation $Z$ between $M$ and $M'$ such that $(w, n, w') \in Z$. 
\end{enumerate}
\end{thm}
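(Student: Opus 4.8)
The three statements can be arranged in the cycle $(3) \Rightarrow (2) \Rightarrow (1) \Rightarrow (3)$. The implication $(3) \Rightarrow (2)$ is immediate, since a downward closed layered $(P, Q)$-bisimulation is in particular a layered $(P, Q)$-bisimulation.

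For $(2) \Rightarrow (1)$ the plan is to prove a preservation lemma: if $Z$ is any layered $(P, Q)$-bisimulation and $(a, m, a') \in Z$, then every $(P, Q)$-formula $\varphi$ with $d(\varphi) \leq m$ satisfies $a \Vdash \varphi \Rightarrow a' \Vdash' \varphi$. Since $\neg$ interchanges $(P, Q)$-formulas and $(Q, P)$-formulas, I would prove this simultaneously with its dual, namely $a' \Vdash' \varphi \Rightarrow a \Vdash \varphi$ for every $(Q, P)$-formula $\varphi$ with $d(\varphi) \leq m$, by induction on $m$ with an inner induction on the structure of $\varphi$. The atomic cases are exactly clause (1) of the bisimulation and its contrapositive for the $Q$-variables; the implication case plays the two halves of the statement against each other; and the boxed case uses the back clause (3) for the $(P, Q)$-half and the forth clause (2) for the $(Q, P)$-half, lowering the $Z$-level by one so that the outer induction hypothesis applies. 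Feeding $(w, n, w') \in Z$ and the finitely many formulas of $F_n^{(P, Q)}$ into this lemma then yields $\Th_n^{(P, Q)}(w) \subseteq \Th_n^{(P, Q)}(w')$.

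The substantial direction is $(1) \Rightarrow (3)$, where I would take the canonical relation
\[
	Z = \{(a, m, a') \in W \times \omega \times W' : \Th_m^{(P, Q)}(a) \subseteq \Th_m^{(P, Q)}(a')\}.
\]
First I would record that, by the covering property of $F_m^{(P, Q)}$ up to $\K$-provable equivalence, $(a, m, a') \in Z$ is equivalent to the statement that $a \Vdash \varphi \Rightarrow a' \Vdash' \varphi$ for every $(P, Q)$-formula $\varphi$ with $d(\varphi) \leq m$. Downward closure is then immediate, since this condition only weakens as $m$ decreases, and clause (1) of the bisimulation follows by applying it to the $(P, Q)$-formulas $p$ for $p \in P$ and $\neg q$ for $q \in Q$. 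The membership $(w, n, w') \in Z$ is precisely hypothesis (1).

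I expect the verification of the forth and back clauses to be the main obstacle, and this is exactly where the finiteness of $F_m^{(P, Q)}$ is indispensable. For the forth clause, suppose $(a, m+1, a') \in Z$ and $a \prec x$, and assume toward a contradiction that no successor $x'$ works; by Proposition \ref{CF} this means $x' \nVdash' C_m^{(P, Q)}(x)$ for every $x'$ with $a' \prec' x'$, so $a' \nVdash' \Diamond C_m^{(P, Q)}(x)$. But $C_m^{(P, Q)}(x)$ is a genuine $(P, Q)$-formula, and since $\Diamond$ preserves positive and negative variable occurrences, $\Diamond C_m^{(P, Q)}(x)$ is a $(P, Q)$-formula of depth $\leq m+1$ holding at $a$ via the witness $x$; membership $(a, m+1, a') \in Z$ then forces it to hold at $a'$, a contradiction. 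The back clause is symmetric: assuming no predecessor works, one derives that $a$ satisfies the $(P, Q)$-formula $\Box \neg C_m^{(Q, P)}(x')$ of depth $\leq m+1$, transfers it to $a'$, and contradicts $a' \Vdash' \Diamond C_m^{(Q, P)}(x')$, which holds because $x'$ satisfies its own characteristic formula $C_m^{(Q, P)}(x')$. The crux throughout is that $C_m^{(P, Q)}$ and $C_m^{(Q, P)}$ are honest finite conjunctions of the correct polarity and depth, so that membership in $Z$ can legitimately be tested against them.
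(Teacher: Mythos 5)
Your proposal is correct, and two of its three parts coincide with the paper's argument: $(3)\Rightarrow(2)$ is the same triviality, and your $(2)\Rightarrow(1)$ is exactly the paper's simultaneous induction (outer induction on the level $m$, inner induction on the formula), with the dual claim stated contrapositively but playing the identical role in the implication and box cases. Where you genuinely diverge is in $(1)\Rightarrow(3)$. The paper proves this direction by induction on $n$: given $\Th_{m+1}^{(P,Q)}(x)\subseteq\Th_{m+1}^{(P,Q)}(x')$, it matches each successor $y$ of $x$ (resp.\ $y'$ of $x'$) using $\Diamond C_m^{(P,Q)}(y)$ (resp.\ the symmetric argument), invokes the induction hypothesis to get downward closed bisimulations $Z_y$, $Z_{y'}$, and glues these together with the column $\{(x,k,x'):k\leq m+1\}$ into one relation, asserting (without detailed verification) that the union is again a downward closed layered bisimulation. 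You instead take the single canonical relation $Z=\{(a,m,a'):\Th_m^{(P,Q)}(a)\subseteq\Th_m^{(P,Q)}(a')\}$ and check the three clauses directly, using $\Diamond C_m^{(P,Q)}(x)$ for the forth condition and $\Box\neg C_m^{(Q,P)}(x')$ for the back condition; your polarity bookkeeping (that $\Diamond$ and $\Box$ preserve, and $\neg$ swaps, the $(P,Q)$/$(Q,P)$ classification, and that $C_m$ has the right depth) is what makes these tests legitimate, and Proposition \ref{CF} converts truth of characteristic formulas back into theory inclusion, exactly as in the paper. Both proofs rest on the same tools --- finiteness of $F_m^{(P,Q)}$, characteristic formulas, Proposition \ref{CF} --- but your version dispenses with the induction and the gluing: downward closure is immediate from the observation that theory inclusion weakens as the level decreases, and there is no union whose bisimulation properties need checking. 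The trade-off is that you produce the largest layered bisimulation rather than an explicitly built one; the paper's recursive construction is closer in spirit to the model constructions it performs later (Lemmas \ref{ML1} and \ref{ML2}), but as a proof of Theorem \ref{BBsim} itself your route is, if anything, tidier.
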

\begin{proof}
$(3 \Rightarrow 2)$: Obvious. 

$(2 \Rightarrow 1)$: 
We prove by induction on $m$ that for all $m \in \omega$, $x \in W$ and $x' \in W'$, if there exists a layered $(P, Q)$-bisimulation $Z$ between $M$ and $M'$ such that $(x, m, x') \in Z$, then $\Th_m^{(P, Q)}(x) \subseteq \Th_m^{(P, Q)}(x')$. 
Suppose that the statement holds for all $m' < m$, and that there exists a layered $(P, Q)$-bisimulation $Z$ between $M$ and $M'$ such that $(x, m, x') \in Z$. 
We prove by induction on the construction of $\varphi$ that for any formula $\varphi$, 
\begin{enumerate}
	\item if $\varphi$ is a $(P, Q)$-formula, $d(\varphi) \leq m$ and $x \Vdash \varphi$, then $x' \Vdash' \varphi$; 
	\item if $\varphi$ is a $(Q, P)$-formula, $d(\varphi) \leq m$ and $x \nVdash \varphi$, then $x' \nVdash' \varphi$. 
\end{enumerate}
\begin{itemize}
	\item Base Case (i): $\varphi \equiv p$ for some propositional variable $p$. 

	1. If $p$ is a $(P, Q)$-formula and $x \Vdash p$, then $x' \Vdash' p$ because $p \in P$. 

	2. If $p$ is a $(Q, P)$-formula and $x \nVdash p$, then $x' \nVdash' p$ because $p \in Q$.
	
	\item Base Case (ii): $\varphi \equiv \bot$. 
	1 and 2 follow from $x \nVdash \bot$ and $x' \nVdash' \bot$. 

	\item Induction Case (i): $\varphi \equiv (\psi \to \delta)$. 1 and 2 easily follow from induction hypothesis. 

	\item Induction Case (ii): $\varphi \equiv \Box \psi$. 

	1. Suppose $\Box \psi$ is a $(P, Q)$-formula, $d(\Box \psi) \leq m$ and $x' \nVdash' \Box \psi$. 
	Then $\psi$ is also a $(P, Q)$-formula, $d(\psi) \leq m-1$, and there exists $y' \in W'$ such that $x' \prec' y'$ and $y' \nVdash' \psi$. 
	Since $(x, m, x') \in Z$, there exists $y \in W$ such that $x \prec y$ and $(y, m-1, y') \in Z$. 	
	Then $y \nVdash \psi$ by induction hypothesis. 
	Hence $x \nVdash \Box \psi$. 
	
	2. Suppose $\Box \psi$ is $(Q, P)$-formula, $d(\psi) \leq m$ and $x \nVdash \Box \psi$. 
	Then $\psi$ is a $(Q, P)$-formula, $d(\psi) \leq m-1$ and for some $y \in W$, $x \prec y$ and $y \nVdash \psi$. 
	Then there exists $y' \in W'$ such that $x' \prec' y'$ and $(y, m-1, y') \in Z$ because $(x, m, x') \in Z$. 	
	We have $y' \nVdash' \psi$ by induction hypothesis, and hence $x' \nVdash' \Box \psi$. 
\end{itemize}

$(1 \Rightarrow 3)$: 
We prove by induction on $m$ that for all $m \in \omega$, $x \in W$ and $x' \in W'$, if $\Th_m^{(P, Q)}(x) \subseteq \Th_m^{(P, Q)}(x')$, then there exists a downward closed layered $(P, Q)$-bisimulation $Z$ between $M$ and $M'$ such that $(x, m, x') \in Z$. 

\begin{itemize}
	\item Base Case: $m = 0$. Suppose $\Th_0^{(P, Q)}(x) \subseteq \Th_0^{(P, Q)}(x')$. 
	Let $Z = \{(x, 0, x')\}$. 
	
	Suppose $p \in P$ and $x \Vdash p$. 
	Then $p$ is equivalent to a formula in $\Th_0^{(P, Q)}(x)$. 
	Since $\Th_0^{(P, Q)}(x) \subseteq \Th_0^{(P, Q)}(x')$, we have $x' \Vdash' p$. 
	
	Suppose $q \in Q$ and $x \nVdash q$. 
	Then $\neg q$ is equivalent to some formula in $\Th_0^{(P, Q)}(x)$, and hence $x' \nVdash' q$. 
	
	Therefore $Z$ is a downward closed $(P, Q)$-bisimlation between $M$ and $M'$, and $(x, 0, x') \in Z$. 
	
	\item Induction Case: Assume that the statement holds for $m$. 
	Suppose $\Th_{m+1}^{(P, Q)}(x) \subseteq \Th_{m+1}^{(P, Q)}(x')$. 
	
	For each $y \in W$ with $y \succ x$, $y \Vdash C_m^{(P, Q)}(y)$, and hence $x \Vdash \Diamond C_m^{(P, Q)}(y)$. 
	Since $\Diamond C_m^{(P, Q)}(y)$ is equivalent to some formula in $\Th_{m+1}^{(P, Q)}(x)$, we have $x' \Vdash' \Diamond C_m^{(P, Q)}(y)$ because $\Th_{m+1}^{(P, Q)}(x) \subseteq \Th_{m+1}^{(P, Q)}(x')$. 
	Then there exists $y' \in W'$ such that $y' \succ' x'$ and $y' \Vdash' C_m^{(P, Q)}(y)$. 
	By Proposition \ref{CF}, $\Th_m^{(P, Q)}(y) \subseteq \Th_m^{(P, Q)}(y')$. 
	By induction hypothesis, there exists a downward closed layered $(P, Q)$-bisimulation $Z_y$ between $M$ and $M'$ such that $(y, m, y') \in Z_y$. 
	
	In a similar way, we can prove that for each $y' \in W'$ with $y' \succ' x'$, there exist $y \in W$ and a downward closed layered $(P, Q)$-bisimulation $Z_{y'}$ between $M$ and $M'$ such that $y \succ x$ and $(y, m, y') \in Z_{y'}$. 
	
	Let
	\[
		Z = \{(x, k, x') : k \leq m+1\} \cup \bigcup\{Z_y, Z_{y'} : y \succ x, y' \succ' x'\}. 
	\]
	It is easily shown that $Z$ is a downward closed layered $(P, Q)$-bisimulation between $M$ and $M'$, and $(x, m+1, x') \in Z$. 
\end{itemize}
\qed\end{proof}

\section{ULIP for $\K$, ${\bf KD}$, ${\bf KT}$, ${\bf KB}$, ${\bf KDB}$ and ${\bf KTB}$}\label{Sec:K}

In this section, we prove that the logics ${\bf K}$ and ${\bf KB}$ enjoy ULIP. 
As a consequence, we also obtain ULIP for ${\bf KD}$, ${\bf KT}$, ${\bf KDB}$ and ${\bf KTB}$. 
Consequently, we obtain both UIP and LIP for these logics by Proposition \ref{ULIP}. 
%UIP for ${\bf KB}$, ${\bf KDB}$ and ${\bf KTB}$ are probably new. 

Before proving the theorem, we give a Kripke model theoretic characterization of a slightly sharpened version of ULIP. 

\begin{defn}
Let $\Cl$ be a class of Kripke models. 
We say $\Cl$ has ULIP if for any finite sets $P_1, P_2, P_3, Q_1, Q_2$ and $Q_3$ of propositional variables with $P_1$, $P_2$ and $P_3$ are pairwise disjoint and $Q_1$, $Q_2$ and $Q_3$ are pairwise disjoint, any Kriple models $M = (W, \prec, \Vdash)$ and $M' = (W', \prec', \Vdash')$ in $\Cl$, any elements $w \in W$ and $w' \in W'$ and any natural numbers $m, n \in \omega$, if $\Th_n^{(P_2, Q_2)}(w) \subseteq \Th_n^{(P_2, Q_2)}(w')$, then there exists a Kripke model $M^\ast = (W^\ast, \prec^\ast, \Vdash^\ast)$ in $\Cl$ and $w^\ast \in W^\ast$ such that
\begin{enumerate}
	\item $\Th_n^{(P_1 \cup P_2, Q_1 \cup Q_2)}(w) \subseteq \Th_n^{(P_1 \cup P_2, Q_1 \cup Q_2)}(w^\ast)$ and 
	\item $\Th_m^{(P_2 \cup P_3, Q_2 \cup Q_3)}(w^\ast) \subseteq \Th_m^{(P_2 \cup P_3, Q_2 \cup Q_3)}(w')$.  
\end{enumerate}
\end{defn}

\begin{thm}\label{KC}
For any consistent normal modal logic $L$, the following are equivalent: 
\begin{enumerate}
	\item For any formula $\varphi$ and any finite sets $P$, $Q$ of propositional variables, there exists a uniform Lyndon interpolant $\theta$ of $(\varphi, P, Q)$ in $L$ with $d(\theta) \leq d(\varphi)$. 
	\item $L$ is sound and complete with respect to a class $\Cl$ of Kripke models having ULIP. 
\end{enumerate}
\end{thm}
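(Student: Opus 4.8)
The plan is to prove the two implications separately, using soundness and completeness to pass between $L$-provability and validity in $\Cl$, and using Proposition \ref{CF} to rewrite every inclusion $\Th_n^{(P,Q)}(u) \subseteq \Th_n^{(P,Q)}(v)$ as the single satisfaction statement $v \Vdash C_n^{(P,Q)}(u)$. This translation is the technical bridge that lets the semantic ULIP of $\Cl$ interact with the interpolant.

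For $(2 \Rightarrow 1)$, fix $\varphi$, $P$, $Q$ and set $n = d(\varphi)$, $P' = v^+(\varphi) \setminus P$, $Q' = v^-(\varphi) \setminus Q$. I would take the candidate interpolant to be the finite disjunction
\[
\theta \equiv \bigvee \{ C_n^{(P',Q')}(w) : M = (W,\prec,\Vdash) \in \Cl,\ w \in W,\ w \Vdash \varphi \},
\]
which is finite since each disjunct is a conjunction of a subset of the finite set $F_n^{(P',Q')}$. Conditions 1 and 2 of ULIP and the bound $d(\theta) \le d(\varphi)$ are then immediate because every disjunct is a $(P',Q')$-formula of depth $\le n$, and $L \vdash \varphi \to \theta$ follows by completeness, since any $w$ with $w \Vdash \varphi$ satisfies its own disjunct. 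The substance is condition 4: given $\psi$ with $v^+(\psi)\cap P = v^-(\psi)\cap Q = \emptyset$ and $L \vdash \varphi \to \psi$, I would show $w' \Vdash \theta$ implies $w' \Vdash \psi$ in every $M' \in \Cl$. From $w' \Vdash \theta$ and Proposition \ref{CF} one obtains $\Th_n^{(P',Q')}(w) \subseteq \Th_n^{(P',Q')}(w')$ for some $w \Vdash \varphi$, which is exactly the hypothesis of the ULIP of $\Cl$ for the partition $P_1 = v^+(\varphi)\cap P$, $P_2 = P'$, $P_3 = v^+(\psi)\setminus P'$ and the analogous $Q_i$. The resulting $M^\ast, w^\ast$ satisfies $\varphi$ by its first clause (hence $w^\ast \Vdash \psi$ by soundness), and its second clause, instantiated with $m = d(\psi)$, transports $\psi$ from $w^\ast$ to $w'$; completeness then yields $L \vdash \theta \to \psi$.

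For $(1 \Rightarrow 2)$, I would take $\Cl$ to be the class of all Kripke models validating $L$, so that soundness is immediate and completeness is the Kripke completeness of $L$ with respect to this class. It remains to derive ULIP of $\Cl$ from condition 1. Given the partition and models with $\Th_n^{(P_2,Q_2)}(w) \subseteq \Th_n^{(P_2,Q_2)}(w')$, set $\varphi \equiv C_n^{(P_1\cup P_2,\, Q_1\cup Q_2)}(w)$ and let $\theta$ be a uniform Lyndon interpolant of $(\varphi, P_1, Q_1)$ with $d(\theta) \le d(\varphi) \le n$ provided by condition 1; then $v^+(\theta)\subseteq P_2$ and $v^-(\theta)\subseteq Q_2$. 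Since $w \Vdash \varphi$ and $L \vdash \varphi \to \theta$ we get $w \Vdash \theta$, and as $\theta$ is a $(P_2,Q_2)$-formula of depth $\le n$ the hypothesis gives $w' \Vdash \theta$. The key claim is that $\varphi \wedge C_m^{(Q_2\cup Q_3,\, P_2\cup P_3)}(w')$ is $L$-consistent: otherwise $\psi \equiv \neg C_m^{(Q_2\cup Q_3,\, P_2\cup P_3)}(w')$, which is a $(P_2\cup P_3, Q_2\cup Q_3)$-formula with $v^+(\psi)\cap P_1 = v^-(\psi)\cap Q_1 = \emptyset$, would satisfy $L \vdash \varphi \to \psi$, hence $L \vdash \theta \to \psi$ by the universal property of $\theta$, forcing $w' \Vdash \psi$ and contradicting the trivial $w' \Vdash C_m^{(Q_2\cup Q_3,\, P_2\cup P_3)}(w')$. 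By completeness this consistent formula is realized at some $w^\ast$ in some $M^\ast \in \Cl$, and Proposition \ref{CF} converts $w^\ast \Vdash \varphi$ and $w^\ast \Vdash C_m^{(Q_2\cup Q_3,\, P_2\cup P_3)}(w')$ into the two required inclusions.

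I expect the main obstacle to be organizing the variable partition so that the interpolant's side conditions match the pairwise disjointness of $P_1,P_2,P_3$ and $Q_1,Q_2,Q_3$: getting the roles of the forbidden antecedent-only set $P_1$ and the consequent-only set $P_3$ right is precisely what makes the interpolant's universal property applicable, and it dictates the choices $P = P_1$ (not $P_1 \cup P_3$) in $(1\Rightarrow2)$ and $P_2 = v^+(\varphi)\setminus P$ in $(2\Rightarrow1)$. A secondary, more structural point is the reliance in $(1 \Rightarrow 2)$ on the Kripke completeness of $L$ with respect to $\Cl$, which is what permits an $L$-consistent formula to be realized inside the class.
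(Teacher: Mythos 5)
Your proof is correct. The direction $(1 \Rightarrow 2)$ coincides with the paper's own argument: the same class $\Cl$ (all models validating $L$, with the canonical model supplying completeness), the same test formula $\varphi \equiv C_n^{(P_1 \cup P_2, Q_1 \cup Q_2)}(w)$, the same interpolant of $(\varphi, P_1, Q_1)$, and your consistency claim for $\varphi \wedge C_m^{(Q_2 \cup Q_3, P_2 \cup P_3)}(w')$ is exactly the contrapositive form of the paper's step ``$L \nvdash \theta \to \neg \psi$, hence $L \nvdash \varphi \to \neg \psi$''.

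The genuine difference is the construction of the interpolant in $(2 \Rightarrow 1)$. The paper defines it syntactically as $\theta \equiv \bigwedge\{\delta \in F_n^{(P_2,Q_2)} : L \vdash \varphi \to \delta\}$, so that $L \vdash \varphi \to \theta$ holds by construction, and proves clause 4 of ULIP contrapositively: a countermodel of $\theta \to \psi$ at $w'$ yields $L \nvdash \varphi \to \neg C_n^{(Q_2,P_2)}(w')$ (otherwise that formula would be $L$-equivalent to a conjunct of $\theta$), and a second application of completeness then produces the world $w \Vdash \varphi$ with $\Th_n^{(P_2,Q_2)}(w) \subseteq \Th_n^{(P_2,Q_2)}(w')$. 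Your $\theta$ is the semantic dual: a (finite, as you correctly note) disjunction of characteristic formulas $C_n^{(P',Q')}(w)$ of pointed models of $\varphi$ in $\Cl$. This hands you the required $w$ immediately from whichever disjunct $w'$ satisfies, and the ULIP-of-$\Cl$ step plus soundness gives $w' \Vdash \psi$ outright; the price is that even $L \vdash \varphi \to \theta$ now needs completeness. After that point the two proofs apply the ULIP of $\Cl$ with effectively the same partition (your $P_1 = v^+(\varphi) \cap P$ versus the paper's $P_1 = P$ changes nothing) and invoke Proposition \ref{CF} in the same places. What the paper's syntactic definition buys is an interpolant defined independently of $\Cl$, with half of the ULIP clauses verified purely proof-theoretically; what your semantic definition buys is a direct, non-contrapositive verification of the universal property. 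Since the two interpolants are $L$-equivalent, this is a dual presentation of the same argument rather than a different method, but it is a clean and legitimate variant.
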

\begin{proof}
$(1 \Rightarrow 2)$: 
Suppose that the condition stated in Clause 1 holds for $L$. 
Let $\Cl$ be a class of all Kripke models in which $L$ is valid. 
Then $L$ is sound and complete with respect to $\Cl$ by the method of the canonical model of $L$ (see \cite{HC96}). 
Let $P_1, P_2, P_3, Q_1, Q_2$ and $Q_3$ be any finite sets of propositional variables with $P_1$, $P_2$ and $P_3$ are pairwise disjoint and $Q_1$, $Q_2$ and $Q_3$ are pairwise disjoint. 
Let $M = (W, \prec, \Vdash)$ and $M' = (W', \prec', \Vdash')$ be any Kripke models in $\Cl$, $w \in W$ and $w' \in W'$ be any elements and $m, n \in \omega$ be any natural numbers. 
Assume $\Th_n^{(P_2, Q_2)}(w) \subseteq \Th_n^{(P_2, Q_2)}(w')$. 

Let $\varphi$ and $\psi$ be the formulas $C_n^{(P_1 \cup P_2, Q_1 \cup Q_2)}(w)$ and $C_m^{(Q_2 \cup Q_3, P_2 \cup P_3)}(w')$, respectively. 
Then we obtain a uniform Lyndon interpolant $\theta$ of $(\varphi, P_1, Q_1)$ in $L$ with $d(\theta) \leq d(\varphi) = n$. 
We have $L \vdash \varphi \to \theta$, $v^+(\theta) \subseteq v^+(\varphi) \setminus P_1 \subseteq P_2$ and $v^-(\theta) \subseteq v^-(\varphi) \setminus Q_1 \subseteq Q_2$. 
Thus $w \Vdash \theta$, and $\theta$ is equivalent to some formula in $\Th_n^{(P_2, Q_2)}(w)$. 
By the assumption, we obtain $w' \Vdash' \theta$. 

Since $w' \nVdash' \neg \psi$, $w' \nVdash' \theta \to \neg \psi$. 
Thus $L \nvdash \theta \to \neg \psi$. 
Hence $L \nvdash \varphi \to \neg \psi$ because $v^+(\neg \psi) \cap P_1 = v^-(\neg \psi) \cap Q_1 = \emptyset$. 
Then there exists a Kripke model $M^\ast = (W^\ast, \prec^\ast, \Vdash^\ast)$ in $\Cl$ and $w^\ast \in W^\ast$ such that $w^\ast \Vdash^\ast \varphi$ and $w^\ast \Vdash^\ast \psi$. 
By Proposition \ref{CF}, we conclude $\Th_n^{(P_1 \cup P_2, Q_1 \cup Q_2)}(w) \subseteq \Th_n^{(P_1 \cup P_2, Q_1 \cup Q_2)}(w^\ast)$ and $\Th_m^{(P_2 \cup P_3, Q_2 \cup Q_3)}(w^\ast) \subseteq \Th_m^{(P_2 \cup P_3, Q_2 \cup Q_3)}(w')$.

$(2 \Rightarrow 1)$: 
Suppose that $L$ is sound and complete with respect to a class $\Cl$ of Kripke models having ULIP. 
Let $\varphi$ be any formula and $P, Q$ be any finite sets of propositional variables. 
Let $P_1 = P$, $P_2 = v^+(\varphi) \setminus P$, $Q_1 = Q$, $Q_2 = v^-(\varphi) \setminus Q$ and $n =d(\varphi)$. 
Also let
\[
	\theta \equiv \bigwedge \{\delta \in F_n^{(P_2, Q_2)} : L \vdash \varphi \to \delta\}.
\]
Then $v^+(\theta) \subseteq v^+(\varphi) \setminus P$, $v^-(\theta) \subseteq v^-(\varphi) \setminus Q$, $L \vdash \varphi \to \theta$ and $d(\theta) \leq n = d(\varphi)$. 
Let $\psi$ be any formula with $v^+(\psi) \cap P = v^-(\psi) \cap Q = \emptyset$ and $L \nvdash \theta \to \psi$. 
We would like to show $L \nvdash \varphi \to \psi$. 

Let $P_3 = v^+(\psi) \setminus v^+(\varphi)$, $Q_3 = v^-(\psi) \setminus v^-(\varphi)$ and $m = d(\psi)$. 
Since $L \nvdash \theta \to \psi$, there exists a Kriple model $M' = (W', \prec', \Vdash')$ in $\Cl$ and $w' \in W'$ such that $w' \Vdash' \theta$ and $w' \nVdash' \psi$. 
Since $w' \nVdash' \theta \to \neg C_n^{(Q_2, P_2)}(w')$, we have $L \nvdash \theta \to \neg C_n^{(Q_2, P_2)}(w')$. 
By the definition of $\theta$, we obtain $L \nvdash \varphi \to \neg C_n^{(Q_2, P_2)}(w')$. 
Then there exists a Kripke model $M = (W, \prec, \Vdash)$ in $\Cl$ and $w \in W$ such that $w \Vdash \varphi$ and $w \Vdash C_n^{(Q_2, P_2)}(w')$. 
By Proposition \ref{CF}, we have $\Th_n^{(P_2, Q_2)}(w) \subseteq \Th_n^{(P_2, Q_2)}(w')$. 
Since $\Cl$ has ULIP, there exists a Kripke model $M^\ast = (W^\ast, \prec^\ast, \Vdash^\ast)$ in $\Cl$ and $w^\ast \in W^\ast$ such that $\Th_n^{(P_1 \cup P_2, Q_1 \cup Q_2)}(w) \subseteq \Th_n^{(P_1 \cup P_2, Q_1 \cup Q_2)}(w^\ast)$ and $\Th_m^{(P_2 \cup P_3, Q_2 \cup Q_3)}(w^\ast) \subseteq \Th_m^{(P_2 \cup P_3, Q_2 \cup Q_3)}(w')$. 

Since $w \Vdash \varphi$ and $\varphi$ is equivalent to a formula in $F_n^{(P_1 \cup P_2, Q_1 \cup Q_2)}$, we have $w^\ast \Vdash^\ast \varphi$. 
Also since $w' \nVdash' \psi$ and $\psi$ is equivalent to a formula in $F_m^{(P_2 \cup P_3, Q_2 \cup Q_3)}$, we have $w^\ast \nVdash^\ast \psi$. 
Hence $w^\ast \nVdash^\ast \varphi \to \psi$. 
We conclude $L \nvdash \varphi \to \psi$. 
\qed\end{proof}

\begin{defn}
The classes of all Kripke models and all symmetric Kripke models are denoted by $\Cl_{\bf K}$ and $\Cl_{\bf B}$, respectively. 
\end{defn}

\begin{fact}(See \cite{HC96})
${\bf K}$ and ${\bf KB}$ are sound and complete with respect to the classes $\Cl_{\bf K}$ and $\Cl_{\bf B}$, respectively. 
\end{fact}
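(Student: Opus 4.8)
The plan is to establish soundness and completeness separately by the standard canonical model method, which is in any case exactly the machinery the authors invoke in the proof of Theorem \ref{KC}. The statement is a standard textbook fact, so the only genuinely logic-specific point is the symmetry of the canonical frame for ${\bf KB}$.

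For soundness I would argue by induction on the length of $L$-proofs, checking that every axiom is valid in the relevant class and that the rules preserve validity. All propositional tautologies and the distribution axiom $\Box(p \to q) \to (\Box p \to \Box q)$ are valid in \emph{every} Kripke model, and modus ponens, necessitation and uniform substitution preserve validity; this yields soundness of $\K$ with respect to $\Cl_{\bf K}$. For ${\bf KB}$ the only extra task is to verify that $p \to \Box \Diamond p$ is valid in every symmetric model: if $w \Vdash p$ and $w \prec x$, then $x \prec w$ by symmetry, so $x \Vdash \Diamond p$, whence $w \Vdash \Box \Diamond p$.

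For completeness I would build the canonical model $M_L = (W_L, \prec_L, \Vdash_L)$ whose worlds are the maximal $L$-consistent sets, with $w \prec_L v$ iff $\{\varphi : \Box \varphi \in w\} \subseteq v$ and $w \Vdash_L p$ iff $p \in w$. The heart of the argument is the Truth Lemma, $w \Vdash_L \varphi \iff \varphi \in w$, proved by induction on $\varphi$, the modal case using Lindenbaum's lemma and the distribution axiom in the usual way. Consequently, if $L \nvdash \varphi$ then $\neg \varphi$ lies in some maximal consistent set, so $\varphi$ fails there, giving completeness of $\K$ with respect to $\Cl_{\bf K}$.

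The one place I would concentrate is showing that the canonical frame of ${\bf KB}$ is \emph{symmetric}, so that $M_{{\bf KB}} \in \Cl_{\bf B}$. Suppose $w \prec_{\bf KB} v$; I must show $v \prec_{\bf KB} w$, i.e. that $\Box \varphi \in v$ implies $\varphi \in w$. Contrapositively, if $\varphi \notin w$ then $\neg \varphi \in w$ by maximality, so the instance $\neg \varphi \to \Box \Diamond \neg \varphi$ of the $B$-axiom gives $\Box \Diamond \neg \varphi \in w$; since $w \prec_{\bf KB} v$ this yields $\Diamond \neg \varphi \in v$, that is $\Box \varphi \notin v$. Hence $v \prec_{\bf KB} w$, so $M_{{\bf KB}}$ is symmetric, and completeness of ${\bf KB}$ with respect to $\Cl_{\bf B}$ follows from the Truth Lemma exactly as in the $\K$ case. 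Beyond this symmetry check, everything is the routine canonical-model argument, so I expect no further obstacle.
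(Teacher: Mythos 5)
Your proposal is correct and is exactly the argument the paper implicitly relies on: this statement appears as a cited \emph{Fact} (to Hughes--Cresswell) with no proof given, and the standard canonical-model method you carry out---including the symmetry check for the canonical frame of ${\bf KB}$ via the axiom $p \to \Box\Diamond p$---is the textbook proof being referenced, the same machinery the paper invokes in Theorem \ref{KC}. No gaps; the symmetry verification is the only logic-specific step and you handle it correctly.
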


By Theorem \ref{KC}, for ULIP of ${\bf K}$ and ${\bf KB}$, it suffices to prove that the classes $\Cl_{\bf K}$ and $\Cl_{\bf B}$ have ULIP. 
We prove the following lemma by modifying Visser's proof \cite{Vis96}. 

\begin{lem}\label{ML1}
The classes $\Cl_{\bf K}$ and $\Cl_{\bf B}$ have ULIP. 
\end{lem}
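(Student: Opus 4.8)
The plan is to prove that each of the two classes $\Cl_{\bf K}$ and $\Cl_{\bf B}$ satisfies the Kripke-model formulation of ULIP given in the preceding definition: starting from two models $M, M'$ with a pointed inclusion of $(P_2, Q_2)$-theories at level $n$, I must construct a third model $M^\ast$ with a point $w^\ast$ that simultaneously sits above $w$ for the enlarged signature $(P_1\cup P_2, Q_1\cup Q_2)$ at level $n$ and below $w'$ for the enlarged signature $(P_2\cup P_3, Q_2\cup Q_3)$ at level $m$. The natural candidate for $M^\ast$ is a \emph{product-style} or \emph{amalgamated} model built from the submodels of $M$ below $w$ and of $M'$ below $w'$, whose worlds are pairs (or triples carrying a counter) coming from the two given models, and whose valuation is chosen variable-by-variable according to which of the three blocks $P_1, P_2, P_3$ (resp. $Q_1, Q_2, Q_3$) a variable lies in: on the shared block $P_2$ I copy the common truth value, on $P_1$ I read off from $M$, and on $P_3$ I read off from $M'$ (symmetrically for the negative blocks). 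This is exactly the bookkeeping that makes the two required theory-inclusions hold.

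First I would invoke Theorem~\ref{BBsim} to replace the theory-inclusion hypothesis $\Th_n^{(P_2,Q_2)}(w) \subseteq \Th_n^{(P_2,Q_2)}(w')$ by a concrete downward closed layered $(P_2,Q_2)$-bisimulation $Z$ with $(w,n,w') \in Z$; working with the bisimulation rather than with theories is what lets me perform an explicit construction. I would then define $W^\ast$ to consist of triples $(x, k, x')$ drawn from $Z$ (so that $x$ and $x'$ are already $(P_2,Q_2)$-bisimilar at level $k$), put $w^\ast = (w, n, w')$, and define the accessibility relation $\prec^\ast$ so that a step decrements the counter and moves compatibly in \emph{both} coordinates, using the back-and-forth clauses of the bisimulation to guarantee that successors exist on each side. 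The valuation $\Vdash^\ast$ is set blockwise as described above. With this setup, one checks that the projection $(x,k,x') \mapsto x$ together with the signature $(P_1\cup P_2, Q_1\cup Q_2)$ yields a layered bisimulation witnessing clause~(1), and the projection to the $M'$-coordinate with signature $(P_2\cup P_3, Q_2\cup Q_3)$ witnesses clause~(2); by Theorem~\ref{BBsim} these bisimulations deliver exactly the two theory-inclusions demanded.

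For $\Cl_{\bf B}$ there is an extra burden: $M^\ast$ must be \emph{symmetric}, so the accessibility relation cannot simply decrement a counter in one direction. I expect this to be the main obstacle. The fix is to let $\prec^\ast$ be the symmetric closure of the forward relation, and then to re-prove the bisimulation clauses in the presence of backward edges, which is where symmetry of $M$ and $M'$ is consumed: when the construction needs a predecessor in $M^\ast$, symmetry of the original models supplies the matching step in each coordinate. The delicate point is that adding symmetric edges must not destroy the two one-directional theory-inclusions, so the counter/level bookkeeping and the back-and-forth argument have to be arranged to remain valid along symmetric edges as well. The remaining verifications — that $\prec^\ast$ is well defined, that the blockwise valuation makes the atomic clauses of both bisimulations hold, and that the counters decrease appropriately — are routine once the construction is fixed, so the genuine content lies in choosing $W^\ast$ and $\prec^\ast$ correctly and in handling the symmetric case.
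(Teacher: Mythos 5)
There is a genuine gap, and it is exactly the point where the paper's construction departs from the obvious one. You take $W^\ast$ to consist only of triples drawn from $Z$ and let $\prec^\ast$ strictly decrement the counter, so every $\prec^\ast$-path issuing from $w^\ast=(w,n,w')$ has length at most $n$; hence $w^\ast \Vdash^\ast \Box^{n+1}\bot$. But the definition of ULIP for a class of models quantifies over \emph{all} $m$, and $\Box^{n+1}\bot$, being a constant formula, is a $(P_2\cup P_3, Q_2\cup Q_3)$-formula of depth $n+1$. So for any $m>n$ your clause~(2), namely $\Th_m^{(P_2\cup P_3, Q_2\cup Q_3)}(w^\ast) \subseteq \Th_m^{(P_2\cup P_3, Q_2\cup Q_3)}(w')$, would force $w' \Vdash' \Box^{n+1}\bot$, which is false in general: take $M=M'$ to be an infinite chain and $w=w'$ its root, so that the hypothesis $\Th_n^{(P_2,Q_2)}(w) \subseteq \Th_n^{(P_2,Q_2)}(w')$ holds trivially. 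Equivalently, the back clause of your projection relation $((x,s,x'),t,x')$ breaks down at counter-zero worlds: given $x' \prec' y'$ there need be no $\prec^\ast$-successor of $(x,0,x')$ at all; and by Theorem \ref{BBsim} no cleverer choice of bisimulation can repair this, since the theory inclusion itself fails. The same defect survives your symmetric-closure variant for $\Cl_{\bf B}$, since a successor of $(x,0,x')$ produced by a backward edge must still be a triple of $Z$ with counter $1$, which need not exist.

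The paper's proof fixes precisely this: it first pads $M$ to $M^+$ by a fresh world $\mathbb{I}$ related to everything in both directions, introduces a counter value $\varepsilon$ with $0-1=\varepsilon$ and $\varepsilon-1=\varepsilon$, and sets $W^\ast = Z \cup \{(\mathbb{I},\varepsilon,x') : x' \in W'\}$. These extra worlds keep the clause-(2) bisimulation alive to arbitrary depth: when the counter is exhausted ($s \in \{0,\varepsilon\}$) and $x' \prec' y'$, one steps to $(\mathbb{I},\varepsilon,y')$, and can keep doing so forever. Moreover, the counter condition in $\prec^\ast$ is taken symmetric from the start ($t=s-1$, $t=s$ or $s=t-1$), so symmetry of $M^\ast$ is inherited automatically and $\Cl_{\bf K}$ and $\Cl_{\bf B}$ are handled by a single construction, with no separate re-proof for backward edges. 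A secondary but real omission: your blockwise valuation is underdetermined on the overlaps $P_i \cap Q_j$ (only the $P$'s are pairwise disjoint and the $Q$'s are pairwise disjoint; a $P_i$ may meet a $Q_j$). For instance a variable $p \in P_1 \cap Q_3$ must be true at $(x,s,x')$ as soon as $x \Vdash p$ \emph{or} $x' \Vdash' p$, while $p \in P_3 \cap Q_1$ requires the conjunction; this is why the paper needs the sixteen cases of Table \ref{C1}, including special stipulations at the worlds $(\mathbb{I},\varepsilon,x')$, where no bisimulation ties the two coordinates together.
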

\begin{proof}
Let $P_1, P_2, P_3, Q_1, Q_2$ and $Q_3$ be any finite sets of propositional variables with $P_1$, $P_2$ and $P_3$ are pairwise disjoint and $Q_1$, $Q_2$ and $Q_3$ are pairwise disjoint. 
Let $M = (W, \prec, \Vdash)$ and $M' = (W', \prec', \Vdash')$ be any Kripke models, $w \in W$ and $w' \in W'$ be any elements and $m, n \in \omega$ be any natural numbers. 

Suppose $\Th_n^{(P_2, Q_2)}(w) \subseteq \Th_n^{(P_2, Q_2)}(w')$. 
Then there exists a layered $(P_2, Q_2)$-bisimulation $Z$ between $M$ and $M'$ such that $(w, n, w') \in Z$ by Theorem \ref{BBsim}. 

Let $M^+ = (W^+, \prec^+, \Vdash^+)$ be a Kripke model defined as follows:
\begin{enumerate}
	\item $W^+ = W \cup \{\mathbb{I}\}$, where $\mathbb{I}$ is a new object; 
	\item $\prec^+ = \prec \cup \{(x, \mathbb{I}), (\mathbb{I}, x) : x \in W^+\}$; 
	\item for each propositional variable $p$, $w \Vdash^+ p$ if and only if $w \Vdash p$ for $w \in W$, and $\mathbb{I} \nVdash^+ p$. 
\end{enumerate}

It is easy to see that if $M$ is symmetrical, then so is $M^+$.  

Let $\varepsilon$ be a new object and define $0 - 1 = \varepsilon$ and $\varepsilon - 1 = \varepsilon$. 
We define a Kripke model $M^\ast = (W^\ast, \prec^\ast, \Vdash^\ast)$ and an element $w^\ast \in W^\ast$ as follows: 
\begin{enumerate}
	\item $W^\ast = Z \cup \{(\mathbb{I}, \varepsilon, x') : x' \in W'\}$; 
	\item $(x, s, x') \prec^\ast (y, t, y')$ if and only if $x \prec^+ y$, ($t = s - 1$, $t = s$ or $s = t -1$) and $x' \prec' y'$; 
	\item for each propositional variable $p$, $(x, s, x') \Vdash^\ast p$ if and only if one of the conditions from $1$ to $16$ in the following table (Table \ref{C1}) holds: 
	(for instance, Clause 1 in the table expresses the condition `$p \in P_1 \cap Q_1$, $p \notin P_2 \cup P_3 \cup Q_2 \cup Q_3$ and $x \Vdash^+ p$'): 
 
	\begin{table}[h]
	\begin{center}
	\caption{Conditions for the definition of $\Vdash^\ast$}\label{C1}
	\begin{tabular}{|c|c|c|c|c|c|c|l|}
 	\hline
	 & $P_1$ & $P_2$ & $P_3$ & $Q_1$ & $Q_2$ & $Q_3$ & \\
	\hline
	1 & $\checkmark$ &  &  & $\checkmark$ &  &  & $x \Vdash^+ p$ \\
	\hline
	2 & $\checkmark$ &  &  &  & $\checkmark$ &  & $x \Vdash^+ p$ or $x = \mathbb{I}$ \\
	\hline
	3 & $\checkmark$ &  &  &  &  & $\checkmark$ & $x \Vdash^+ p$ or $x' \Vdash' p$ \\
	\hline
	4 & $\checkmark$ &  &  &  &  &  & $x \Vdash^+ p$ \\
	\hline
	5 &  & $\checkmark$ &  & $\checkmark$ &  &  & $x \Vdash^+ p$ \\
	\hline
	6 &  & $\checkmark$ &  &  & $\checkmark$ &  & $x' \Vdash' p$ \\
	\hline
	7 &  & $\checkmark$ &  &  &  & $\checkmark$ & $x' \Vdash' p$ \\
	\hline
	8 &  & $\checkmark$ &  &  &  &  & $x \Vdash^+ p$ \\
	\hline
	9 &  &  & $\checkmark$ & $\checkmark$ &  &  & $x \Vdash^+ p$ and $x' \Vdash' p$ \\
	\hline
	10 &  &  & $\checkmark$ &  & $\checkmark$ &  & $x' \Vdash' p$ \\
	\hline
	11 &  &  & $\checkmark$ &  &  & $\checkmark$ & $x' \Vdash' p$ \\
	\hline
	12 &  &  & $\checkmark$ &  &  &  & $x' \Vdash' p$ \\
	\hline
	13 &  &  &  & $\checkmark$ &  &  & $x \Vdash^+ p$ \\
	\hline
	14 &  &  &  &  & $\checkmark$ &  & $x' \Vdash' p$ \\
	\hline
	15 &  &  &  &  &  & $\checkmark$ & $x' \Vdash' p$ \\
	\hline
	16 &  &  &  &  &  &  & $x \Vdash^+ p$ \\
	\hline
	\end{tabular}
	\end{center}
	\end{table}

	\item $w^\ast = (w, n, w')$. 
\end{enumerate}

	Notice that if both $M^+$ and $M'$ are symmetrical, then $M^\ast$ is also symmetrical. 

	\vspace{0.1in}
	
	{\bf Claim 1}. Suppose $(x, s, x') \in W^\ast$. 
	\begin{enumerate}
		\item If $p \in P_1 \cup P_2$, $x \in W$ and $x \Vdash p$, then $(x, s, x') \Vdash^\ast p$. 
		\item If $p \in Q_1 \cup Q_2$, $x \in W$ and $x \nVdash p$, then $(x, s, x') \nVdash^\ast p$. 
		\item If $p \in P_2 \cup P_3$ and $(x, s, x') \Vdash^\ast p$, then $x' \Vdash' p$. 
		\item If $p \in Q_2 \cup Q_3$ and $(x, s, x') \nVdash^\ast p$, then $x' \nVdash' p$. 
	\end{enumerate}
	\begin{proof}
	1. Suppose $p \in P_1 \cup P_2$, $x \in W$ and $x \Vdash p$. 
	Then $x \Vdash^+ p$. 
	If $p \in P_1$, then one of the conditions $1$, $2$, $3$ and $4$ holds. 
	If not, we have $p \in P_2$. 
	Since $x \in W$, we have $(x, s, x') \in Z$. 
	Since $Z$ is a layered $(P_2, Q_2)$-bisimulation, we obtain $x' \Vdash' p$. 
	Hence one of the conditions $5$, $6$, $7$ and $8$ holds. 
	In either case, we obtain $(x, s, x') \Vdash^* p$.
	
	2. Suppose $p \in Q_1 \cup Q_2$, $x \in W$ and $(x, s, x') \Vdash^* p$.  
	Then one of the conditions $1$, $2$, $5$, $6$, $9$, $10$, $13$ and $14$ holds. 
	If one of the conditions $1$, $2$, $5$, $9$ and $13$ holds, then $x \Vdash^+ p$ because $x \neq \mathbb{I}$. 
	Hence $x \Vdash p$. 
	If one of the conditions $6$, $10$ and $14$ holds, then $x' \Vdash' p$ and $p \in Q_2$. 
	Hence $x \Vdash p$ holds because $(x, s, x') \in Z$ and $Z$ is a layered $(P_2, Q_2)$-bisimulation. 
	
	3. Suppose $p \in P_2 \cup P_3$ and $(x, s, x') \Vdash^\ast p$. 
	Then one of the conditions from $5$ to $12$ holds. 
	If one of the conditions $6$, $7$, $9$, $10$, $11$ and $12$ holds, then $x' \Vdash' p$. 
	If one of the conditions $5$ and $8$ holds, then $x \Vdash^+ p$ and $p \in P_2$. 
	Since $x \Vdash^+ p$ and $\mathbb{I} \nVdash^+ p$, we have $x \neq \mathbb{I}$. 
	Therefore $(x, s, x') \in Z$. 
	We obtain $x' \Vdash' p$ because of $Z$. 
	
	4. Suppose $p \in Q_2 \cup Q_3$ and $x' \Vdash' p$. 
	If $p \notin P_1 \cap Q_2$ or $x = \mathbb{I}$, then one of the conditions $2$, $3$, $6$, $7$, $10$, $11$, $14$ and $15$ holds. 
	If $p \in P_1 \cap Q_2$ and $x \neq \mathbb{I}$, then $(x, s, x') \in Z$ and hence $x \Vdash^+ p$ because of $Z$. 
	In this case, the condition $2$ holds. 
	In either case, we have $(x, s, x') \Vdash^\ast p$. 
	\qed\end{proof}

	\vspace{0.1in}
	
	{\bf Claim 2}. $\Th_n^{(P_1 \cup P_2, Q_1 \cup Q_2)}(w) \subseteq \Th_n^{(P_1 \cup P_2, Q_1 \cup Q_2)}(w^\ast)$. 
	
	\begin{proof}
	Let
	\[
		Z_1 = \{(x, t, (x, s, x')) : (x, s, x') \in Z\ \text{and}\ t \leq s\}.
	\]
	Then $Z_1 \subseteq W \times \omega \times W^\ast$. 
	\begin{enumerate}
		\item Suppose $(x, t, (x, s, x')) \in Z_1$. 
		Then $(x, s, x') \in Z \subseteq W^\ast$. 
		If $p \in P_1 \cup P_2$ and $x \Vdash p$, then $(x, s, x') \Vdash^\ast p$ by Claim 1.1. 
		If $q \in Q_1 \cup Q_2$ and $x \nVdash q$, then $(x, s, x') \nVdash^\ast q$ by Claim 1.2. 
		\item Suppose $(x, t+1, (x, s, x')) \in Z_1$ and $x \prec y$ for $y \in W$. 
		Then $(x, s, x') \in Z$ and $t + 1 \leq s$. 
		Since $s \geq 1$, there exists $y' \in W'$ such that $x' \prec' y'$ and $(y, s-1, y') \in Z$. 
		Then $(x, s, x') \prec^\ast (y, s-1, y')$ and $(y, t, (y, s-1, y')) \in Z_1$ because $t \leq s-1$. 
		\item Suppose $(x, t+1, (x, s, x')) \in Z_1$ and $(x, s, x') \prec^\ast (y, u, y')$ for $(y, u, y') \in W^\ast$. 
		Then $(x, s, x') \in Z$ and $t + 1 \leq s$. 
		By the definition of $\prec^\ast$, either $u = s - 1$, $u = s$, or $s = u -1$.  
		In either case, $t \leq s - 1 \leq u$. 
		Since $u \in \omega$, we have $(y, u, y') \in Z$. 
		Therefore we conclude $x \prec y$ and $(y, t, (y, u, y')) \in Z_1$. 
	\end{enumerate}
	We have proved that $Z_1$ is a layered $(P_1 \cup P_2, Q_1 \cup Q_2)$-bisimulation between $M$ and $M^\ast$. 
	Since $w^\ast = (w, n, w') \in Z$, we have $(w, n, w^\ast) \in Z_1$. 
	By Theorem \ref{BBsim}, we conclude $\Th_n^{(P_1 \cup P_2, Q_1 \cup Q_2)}(w) \subseteq \Th_n^{(P_1 \cup P_2, Q_1 \cup Q_2)}(w^\ast)$. 
	\qed\end{proof}

	\vspace{0.1in}
	
	{\bf Claim 3}. $\Th_m^{(P_2 \cup P_3, Q_2 \cup Q_3)}(w^\ast) \subseteq \Th_m^{(P_2 \cup P_3, Q_2 \cup Q_3)}(w')$. 
	
	\begin{proof}
	Let
	\[
		Z_2 = \{((x, s, x'), t, x') : (x, s, x') \in W^\ast\ \text{and}\ t \in \omega\}. 
	\]
	Then $Z_2 \subseteq W^\ast \times \omega \times W'$. 
	\begin{enumerate}
		\item Suppose $((x, s, x'), t, x') \in Z_2$. 
		Then $(x, s, x') \in W^\ast$. 
		If $p \in P_2 \cup P_3$ and $(x, s, x') \Vdash^\ast p$, then $x' \Vdash' p$ by Claim 1.3. 
		If $q \in Q_2 \cup Q_3$ and $(x, s, x') \nVdash^\ast q$, then $x' \nVdash' q$ by Claim 1.4. 
		\item Suppose $((x, s, x'), t+1, x') \in Z_2$ and $(x, s, x') \prec^\ast (y, u, y')$ for $(y, u, y') \in W^\ast$. 
		Then $x' \prec' y'$ and $((y, u, y'), t, y') \in Z_2$. 
		\item Suppose $((x, s, x'), t + 1, x') \in Z_2$ and $x' \prec' y'$ for $y' \in W'$. 
		If $s \in \{0, \varepsilon\}$, then $(x, s, x') \prec^\ast (\mathbb{I}, \varepsilon, y')$ and $((\mathbb{I}, \varepsilon, y'), t, y') \in Z_2$. 
		If $s \geq 1$, then $(x, s, x') \in Z$ and $x \in W$. 
		Hence there exists $y \in W$ such that $x \prec y$ and $(y, s-1, y') \in Z \subseteq W^\ast$. 
		We have $(x, s, x') \prec^\ast (y, s-1, y')$ and $((y, s-1, y'), t, y') \in Z_2$. 
	\end{enumerate}
	We have proved that $Z_2$ is a layered $(P_2 \cup P_3, Q_2 \cup Q_3)$-bisimulation between $M^\ast$ and $M'$. 
	Since $w^\ast = (w, n, w') \in W^\ast$, we have $(w^\ast, m, w') \in Z_2$. 
	By Theorem \ref{BBsim}, we conclude $\Th_m^{(P_2 \cup P_3, Q_2 \cup Q_3)}(w^\ast) \subseteq \Th_m^{(P_2 \cup P_3, Q_2 \cup Q_3)}(w')$. 
	\qed\end{proof}

We have simultaneously proved that both the classes $\Cl_{\bf K}$ and $\Cl_{\bf KB}$ have ULIP. 
	\qed\end{proof}

\begin{thm}\label{ULIPK}
$\K$ and ${\bf KB}$ enjoy ULIP. 
Moreover, in each of these logics, for any formula $\varphi$ and any finite sets $P$, $Q$ of propositional variables, there exists a uniform Lyndon interpolant $\theta$ of $(\varphi, P, Q)$ with $d(\theta) \leq d(\varphi)$. 
\end{thm}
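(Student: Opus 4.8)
The plan is to derive this theorem as an immediate corollary of the machinery already assembled, namely Theorem \ref{KC}, Lemma \ref{ML1}, and the soundness-and-completeness Fact stated just above. The key observation is that both $\K$ and ${\bf KB}$ are consistent normal modal logics, so Theorem \ref{KC} applies to each of them.

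First I would invoke the Fact to record that $\K$ is sound and complete with respect to the class $\Cl_{\bf K}$ of all Kripke models, and that ${\bf KB}$ is sound and complete with respect to the class $\Cl_{\bf B}$ of all symmetric Kripke models. Next, Lemma \ref{ML1} tells us that both $\Cl_{\bf K}$ and $\Cl_{\bf B}$ have ULIP. Combining these two facts, each of $\K$ and ${\bf KB}$ is sound and complete with respect to a class of Kripke models having ULIP; that is, condition (2) of Theorem \ref{KC} holds for both logics.

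Finally, applying the direction $(2 \Rightarrow 1)$ of Theorem \ref{KC} to each logic yields condition (1): for any formula $\varphi$ and any finite sets $P, Q$ of propositional variables, there is a uniform Lyndon interpolant $\theta$ of $(\varphi, P, Q)$ with $d(\theta) \leq d(\varphi)$. This condition simultaneously delivers the existence of uniform Lyndon interpolants, hence ULIP, and the modal-depth bound asserted in the ``Moreover'' clause, so the entire statement follows at once.

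Because all the genuine work has already been discharged in the construction of the model $M^\ast$ inside Lemma \ref{ML1} and in the equivalence of Theorem \ref{KC}, I do not anticipate any real obstacle at this step; the proof is purely an assembly of previously established results. The only point worth verifying is that $\K$ and ${\bf KB}$ are indeed consistent, so that the hypothesis of Theorem \ref{KC} is met, and this is clear since each logic possesses Kripke models.
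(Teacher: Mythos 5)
Your proposal is correct and is exactly the argument the paper intends: Theorem \ref{ULIPK} is stated immediately after the remark that, by Theorem \ref{KC}, it suffices to show $\Cl_{\bf K}$ and $\Cl_{\bf B}$ have ULIP, which is Lemma \ref{ML1}, combined with the soundness-and-completeness Fact. Your added check that $\K$ and ${\bf KB}$ are consistent (required by the hypothesis of Theorem \ref{KC}) is a sensible touch the paper leaves implicit.
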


\begin{cor}
${\bf KD}$, ${\bf KDB}$, ${\bf KT}$ and ${\bf KTB}$ enjoy ULIP. 
Moreover, in each logic $L$ of them, for any formula $\varphi$ and any finite sets $P$, $Q$ of propositional variables, there exists a uniform Lyndon interpolant $\theta$ of $(\varphi, P, Q)$ in $L$ with $d(\theta) \leq d(\varphi)$.
\end{cor}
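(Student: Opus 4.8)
The plan is to obtain all four logics from Theorem~\ref{ULIPK} together with the two closure results already proved, Propositions~\ref{CE} and~\ref{ST}, and to check in each case that the closure operation preserves the modal-depth bound $d(\theta)\le d(\varphi)$.

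First I would treat ${\bf KD}$ and ${\bf KDB}$ using the constant-extension result. Since $\neg\Box\bot$ contains no propositional variables, $\{\neg\Box\bot\}$ is a set of constant formulas, and one has ${\bf KD}=\K+\{\neg\Box\bot\}$ and ${\bf KDB}={\bf KB}+\{\neg\Box\bot\}$. Theorem~\ref{ULIPK} supplies ULIP for $\K$ and ${\bf KB}$, so Proposition~\ref{CE} immediately yields ULIP for ${\bf KD}$ and ${\bf KDB}$. For the depth bound I observe that the interpolant produced in the proof of Proposition~\ref{CE} is literally the interpolant $\theta$ obtained in the base logic; hence the bound $d(\theta)\le d(\varphi)$ furnished by Theorem~\ref{ULIPK} is inherited verbatim.

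Next I would treat ${\bf KT}$ and ${\bf KTB}$ using the $\star$-translation result. From the examples recorded earlier we have $\K^\star={\bf KT}$ and ${\bf KB}^\star={\bf KTB}$, and clearly $\K\subseteq{\bf KT}$ and ${\bf KB}\subseteq{\bf KTB}$, so Proposition~\ref{ST} applies and gives ULIP for ${\bf KT}$ and ${\bf KTB}$. Here the depth bound needs one small extra observation. The interpolant for $L_1$ built in Proposition~\ref{ST} is a uniform Lyndon interpolant $\theta$ of $(\varphi^\star,P,Q)$ in the base logic $L_0\in\{\K,{\bf KB}\}$, so Theorem~\ref{ULIPK} gives $d(\theta)\le d(\varphi^\star)$; I would then verify by a routine induction on $\varphi$ that $d(\varphi^\star)=d(\varphi)$. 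The only nontrivial clause is $(\Box\varphi)^\star\equiv\varphi^\star\land\Box\varphi^\star$, where $d(\varphi^\star\land\Box\varphi^\star)=d(\varphi^\star)+1=d(\varphi)+1=d(\Box\varphi)$, the remaining clauses being immediate. Combining these gives $d(\theta)\le d(\varphi^\star)=d(\varphi)$, as required.

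The argument is essentially bookkeeping, and the one point that genuinely requires care is confirming that the translation $\star$ preserves modal depth, so that the depth-bounded interpolant transfers cleanly through Proposition~\ref{ST}; the constant-extension case for ${\bf KD}$ and ${\bf KDB}$ is entirely transparent since the interpolant itself is left unchanged.
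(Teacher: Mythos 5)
Your proposal is correct and follows essentially the same route as the paper's own proof: ${\bf KD}$ and ${\bf KDB}$ via Proposition~\ref{CE} (with the interpolant unchanged, so the depth bound from Theorem~\ref{ULIPK} is inherited), and ${\bf KT}$ and ${\bf KTB}$ via Proposition~\ref{ST} together with the observation that $d(\varphi^\star)=d(\varphi)$. The paper states this last equality without proof, so your explicit induction on the $\Box$-clause is merely a filled-in detail, not a deviation.
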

\begin{proof}
ULIP for ${\bf KD}$ and ${\bf KDB}$ follows from Proposition \ref{CE}. 
Moreover, from the proof of Proposition \ref{CE}, every uniform Lyndon interpolant $\theta$ of $(\varphi, P, Q)$ in ${\bf K}$ (resp.~{\bf KB}) is also a uniform Lyndon interpolant $\theta$ of $(\varphi, P, Q)$ in ${\bf KD}$ (resp.~${\bf KDB}$). 
By Theorem \ref{ULIPK}, $d(\theta) \leq d(\varphi)$ holds. 

ULIP for ${\bf KT}$ and ${\bf KTB}$ follows from Proposition \ref{ST} because ${\bf K}^\star = {\bf KT}$ and ${\bf KB}^\star = {\bf KTB}$. 
Moreover, from the proof of Proposition \ref{ST}, a uniform Lyndon interpolant $\theta$ of $(\varphi, P, Q)$ in ${\bf KT}$ (resp.~${\bf KTB}$) is given as a uniform Lyndon interpolant of $(\varphi^\star, P, Q)$ in ${\bf K}$ (resp.~{\bf KB}). 
It is easy to show that $d(\varphi^\star) = d(\varphi)$. 
Thus $d(\theta) \leq d(\varphi^\star) = d(\varphi)$ by Theorem \ref{ULIPK}. 
\qed\end{proof}

\section{ULIP for ${\bf GL}$ and ${\bf Grz}$}\label{Sec:GL}

In this section, we prove ULIP for ${\bf GL}$ and ${\bf Grz}$. 
For each formula $\varphi$, let $n(\varphi) : = |\{\psi : \Box \psi \in \Sub(\varphi)\}|$. 
Visser \cite{Vis96} proved that for any formula $\varphi$ and any finite set $P$ of propositional variables, there exists a uniform interpolant $\theta$ of $(\varphi, P)$ in ${\bf GL}$ (or ${\bf Grz}$) with $d(\theta) \leq 4n(\varphi) + 1$. 
Our proof of ULIP for ${\bf GL}$ and ${\bf Grz}$ are also based on Visser's proofs, but there are some modifications. 
Then we obtain interpolants in these logics with lower complexity. 
Namely, we prove the existence of uniform Lyndon interpolants $\theta$ with $d(\theta) \leq 3n(\varphi) + 3$. 

First, we prove ULIP for ${\bf GL}$. 
Let $\Cl_{\bf GL}$ be the class of all finite transitive and irreflexive Kripke models. 
It is known that ${\bf GL}$ is sound and complete with respect to the class $\Cl_{\bf GL}$ (see \cite{Boo93}). 

\begin{lem}\label{ML2}
Let $P_1, P_2, P_3, Q_1, Q_2$ and $Q_3$ be any finite sets of propositional variables with $P_1$, $P_2$ and $P_3$ are pairwise disjoint and $Q_1$, $Q_2$ and $Q_3$ are pairwise disjoint, $\varphi$ be any $(P_1 \cup P_2, Q_1 \cup Q_2)$-formula, $M = (W, \prec, \Vdash)$ and $M' = (W', \prec', \Vdash')$ be any Kripke models in $\Cl_{\bf GL}$, $w \in W$ and $w' \in W'$ be any elements, and $m$ be any natural number. 
Suppose $\Th_{3 n(\varphi)+3}^{(P_2, Q_2)}(w) \subseteq \Th_{3 n(\varphi)+3}^{(P_2, Q_2)}(w')$. 
Then there exists a Kripke model $M^\ast = (W^\ast, \prec^\ast, \Vdash^\ast)$ in $\Cl_{\bf GL}$ and $w^\ast \in W^\ast$ such that for any $\psi \in {\sf Sub}(\varphi)$, 
\begin{enumerate}
	\item If $\psi$ is a $(P_1 \cup P_2, Q_1 \cup Q_2)$-formula and $w \Vdash \psi$, then $w^\ast \Vdash^\ast \psi$; 
	\item If $\psi$ is a $(Q_1 \cup Q_2, P_1 \cup P_2)$-formula and $w \nVdash \psi$, then $w^\ast \nVdash^\ast \psi$; 
	\item $\Th_m^{(P_2 \cup P_3, Q_2 \cup Q_3)}(w^\ast) \subseteq \Th_m^{(P_2 \cup P_3, Q_2 \cup Q_3)}(w')$.  
\end{enumerate}

\end{lem}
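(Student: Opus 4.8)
The plan is to follow the architecture of the proof of Lemma \ref{ML1}, modifying the construction so that the model produced lies in $\Cl_{\bf GL}$, that is, is finite, transitive and irreflexive. Writing $N = 3n(\varphi)+3$, I would first apply Theorem \ref{BBsim} to the hypothesis $\Th_N^{(P_2,Q_2)}(w) \subseteq \Th_N^{(P_2,Q_2)}(w')$ to obtain a layered $(P_2,Q_2)$-bisimulation $Z$ between $M$ and $M'$ with $(w,N,w') \in Z$. As in Lemma \ref{ML1}, the elements of $M^\ast$ are triples $(x,s,x')$, and I would reuse verbatim the valuation table (Table \ref{C1}) that defines $\Vdash^\ast$, so that the local comparison recorded in Claim~1 of that proof transfers with no change. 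The genuine novelty is confined to the accessibility relation and to the index bookkeeping.

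For the frame I would declare $(x,s,x') \prec^\ast (y,t,y')$ on the genuine triples precisely when $x \prec y$, $x' \prec' y'$ and $t < s$. Since $\prec$, $\prec'$ and $<$ are all transitive, $\prec^\ast$ is transitive; since $t<s$ rules out $s=s$ and $\prec$ is irreflexive, $\prec^\ast$ is irreflexive; and since $M$, $M'$ are finite and every index is bounded by $N$, the carrier $W^\ast$ is finite, so $M^\ast \in \Cl_{\bf GL}$. The part played in Lemma \ref{ML1} by the reflexive universal point $\mathbb{I}$ together with its constant index $\varepsilon$---which no $\GL$-frame may contain---would instead be taken over by a faithful copy of the submodel of $M'$ generated by $w'$, with each point indexed by its $\prec'$-rank and glued beneath the triples in a strictly descending fashion; because $M'$ is itself finite, transitive and irreflexive, this copy is a genuine $\GL$-submodel that reproduces $M'$ exactly.

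With the frame in place, the three conclusions are obtained as follows. Conclusions~1 and~2 are the counterparts of Claims~1 and~2 of Lemma \ref{ML1}, but---and this is why the statement asks only about $\psi \in \Sub(\varphi)$---I would prove them by induction on the subformula $\psi$ rather than by exhibiting a full layered bisimulation from $M$ to $M^\ast$, since the strict descent and finiteness of $M^\ast$ block the forth clause of such a bisimulation in general; only the designated subformulas of $\varphi$ survive, and the index budget is spent precisely in following the modal steps of $\varphi$ downward through $\prec^\ast$ while keeping the match with $Z$ alive. Conclusion~3 is the counterpart of Claim~3: I would put $Z_2 = \{((x,s,x'),t,x') : (x,s,x') \in W^\ast,\ t \in \omega\}$, check the three bisimulation clauses---where the back clause from a $\prec'$-successor of $x'$ is met by descending in $Z$ when the index permits and otherwise by jumping into the rank-indexed copy of $M'$, which, $M'$ being finite, supplies a realiser of every $\prec'$-successor at every node---and then invoke Theorem \ref{BBsim} with $(w^\ast,m,w') \in Z_2$ to conclude for the given arbitrary $m$.

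The main obstacle is the index accounting that delivers the sharpened bound $N = 3n(\varphi)+3$. In contrast to the $\K$-case, where a single decrement per modal step is enough, the $\GL$-construction must at once honour the strict descent forced by irreflexivity, preserve the back-and-forth link with $Z$ across every $\Box$ occurring in $\varphi$, and leave headroom for the rank-indexed copy of $M'$. Verifying that three units of index per boxed subformula, together with the additive constant $3$, are exactly what is needed to drive the induction for Conclusions~1 and~2 through every nesting of $\Box$ in $\varphi$ is the delicate step, and it is this tightened count that lowers Visser's bound $4n(\varphi)+1$ to $3n(\varphi)+3$.
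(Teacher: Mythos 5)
There is a genuine gap, and it sits exactly where you locate the ``delicate step.'' Your construction inherits from Lemma \ref{ML1} an escape hatch (there the point $\mathbb{I}$ with index $\varepsilon$, here your rank-indexed copy of $M'$) whose whole purpose is to serve the back clause of $Z_2$ at worlds whose index budget is exhausted --- and such worlds are unavoidable, since $m$ is arbitrary while your indices descend strictly from $3n(\varphi)+3$. In the $\K$-case this escape hatch is harmless because no transitivity is required: the index conditions make the $\mathbb{I}$-part invisible from triples of positive index. Under the transitivity that $\Cl_{\bf GL}$ demands, this is impossible: if the copy of $M'$ is a $\prec^\ast$-successor of index-$0$ triples, it is a $\prec^\ast$-successor of $w^\ast$ itself. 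That kills Conclusions 1 and 2. Concretely, take $\varphi = \Box p$ with $p \in P_1$ and $w \Vdash \Box p$: the bisimulation $Z$ carries no information whatsoever about $P_1$-variables on the $M'$-side, so the points of your copy of $M'$ (whether they ``reproduce $M'$ exactly'' or are valuated via the table with $x=\mathbb{I}$) need not force $p$, and since they are successors of $w^\ast$ you get $w^\ast \nVdash^\ast \Box p$, contradicting Conclusion 1. Patching the copy's valuation (all of $P_1$ true, all of $Q_1$ false) blocks this particular example but then fails on nested subformulas evaluated at low-index triples, where the needed $M'$-side facts are no longer transferred by $Z$; no uniform valuation of an $M$-free copy can simultaneously serve the $(P_1,Q_1)$-side of Conclusions 1--2 and the $(P_3,Q_3)$-side of Conclusion 3.

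The missing ideas are precisely the ones the paper introduces to dissolve this tension, and they have no counterpart in your proposal. First, the worlds of $M^\ast$ are \emph{pairs} $(x,x') \in W \times W'$ --- one never leaves the product, so no external copy of $M'$ is needed --- and the $M$-side of $\prec^\ast$ is not $\prec$ but the coarser relation $x \prec_\varphi y$ (``$y$ preserves all boxed subformulas of $\varphi$ forced at $x$''), which supplies enough successors while keeping exactly the information Conclusions 1--2 require; irreflexivity of $\prec^\ast$ comes from $\prec'$ alone. Second, membership in $W^\ast$ is certified either by a $Z$-link at index $3h_\varphi(x)+3$, where $h_\varphi$ is the height in the strict relation $\prec_\varphi^s$, or by a \emph{witness}, a pair of ancestors carrying three $Z$-links at indices $3h_\varphi(u)+3$, $3h_\varphi(u)+2$, $3h_\varphi(u)+1$ (this triple of links is where the factor $3$ actually comes from --- not from headroom for a copy of $M'$). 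Third, and crucially, the case you have no mechanism for --- refuting $\Box\delta$ at a world whose certificate is exhausted --- is handled by a L\"ob-style argument: since $\Box(\Box\delta\to\delta)\to\Box\delta$ is valid in $M$, from $x \nVdash \Box\delta$ one gets $y$ with $x \prec y$, $y \Vdash \Box\delta$ and $y \nVdash \delta$, whence $u \prec_\varphi^s y$ for the witness component $u$, so $h_\varphi(y) < h_\varphi(u)$ and the pair $(y,y')$ re-enters $W^\ast$ with a fresh certificate at the lower height. This height-decrease argument, with $h_\varphi \leq n(\varphi)$, is what makes the bookkeeping close at $3n(\varphi)+3$; a strictly descending ordinal index with no reset mechanism cannot do it.
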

\begin{proof}
Let $P_1, P_2, P_3, Q_1, Q_2$ and $Q_3$ be any finite sets of propositional variables with $P_1$, $P_2$ and $P_3$ are pairwise disjoint and $Q_1$, $Q_2$ and $Q_3$ are pairwise disjoint. 
Let $\varphi$ be any $(P_1 \cup P_2, Q_1 \cup Q_2)$-formula. 
Let $M = (W, \prec, \Vdash)$ and $M' = (W', \prec', \Vdash')$ be any Kripke models in $\Cl_{\bf GL}$, $w \in W$ and $w' \in W'$ be any elements and $m$ be any natural number. 
Suppose $\Th_{3 n(\varphi) +3}^{(P_2, Q_2)}(w) \subseteq \Th_{3 n(\varphi) + 3}^{(P_2, Q_2)}(w')$. 
Then there exists a downward closed layered $(P_2, Q_2)$-bisimulation $Z$ between $M$ and $M'$ such that $(w, 3n(\varphi)+3, w') \in Z$ by Theorem \ref{CF}. 

We define binary relations $\prec_\varphi$, $\prec_\varphi^s$ and $x \sim_\varphi y$ on $W$ as follows: for $x, y \in W$, 
\begin{itemize}
	\item $x \prec_\varphi y : \Leftrightarrow$ for any $\Box \psi \in \Sub(\varphi)$, if $x \Vdash \Box \psi$, then $y \Vdash \psi \land \Box \psi$;  
	\item $x \prec_\varphi^s y : \Leftrightarrow x \prec_\varphi y$ and for some $\Box \psi \in \Sub(\varphi)$, $x \nVdash \Box \psi$ and $y \Vdash \Box \psi$; 
	\item $x \sim_\varphi y : \Leftrightarrow x = y$ or ($x \prec_\varphi y$ and $y \prec_\varphi x$). 
\end{itemize}
Then $\prec_\varphi$ is transitive, and $\prec_\varphi^s$ is transitive and irreflexive. 
For each $x \in W$, we define the $\varphi$-height $h_\varphi(x)$ of $x$ as follows: $h_\varphi(x) = \sup\{h_\varphi(y)+1 : x \prec_\varphi^s y \in W\}$ (where $\sup \emptyset = 0$). 
By the definition of $\prec_\varphi^s$, there is no $\prec_\varphi^s$-chain of elements of $W$ longer than $n(\varphi) + 1$. 
Thus for all $x \in W$, $h_\varphi(x) \leq n(\varphi)$. 

Notice that if $x \prec_\varphi y \prec_\varphi z$ and $z \not \prec_\varphi y$, then $x \prec_\varphi^s z$. 
Indeed, since $z \not \prec_\varphi y$, $z \Vdash \Box \psi$ and $y \nVdash \psi \land \Box \psi$ for some $\Box \psi \in \Sub(\varphi)$. 
Since $x \prec_\varphi y$, $x \nVdash \Box \psi$. 
By the transitivity of $\prec_\varphi$, we have $x \prec_\varphi z$. 
Therefore we obtain $x \prec_\varphi^s z$. 

Let $\preceq$ and $\preceq'$ be the reflexive closures of $\prec$ and $\prec'$, respectively. 
For $(x, x'), (u, u'), (v, v') \in W \times W'$, we say that $\langle (u, u'), (v, v') \rangle$ is a {\it witness of $(x, x')$} if the following conditions hold:\footnote{Essential parts of the modification of our proof from Visser's are the use of the relation $\prec_\varphi^s$ and this definition of witnesses.} 
\begin{enumerate}
	\item $u \prec v \preceq x$ and $u' \prec' v' \preceq' x'$; 
	\item $x \sim_\varphi v$; 
	\item $(u, 3 h_\varphi(u) + 3, u')$, $(v, 3 h_\varphi(u) + 2, v')$ and $(x, 3 h_\varphi(u) + 1, x')$ are in $Z$. 
\end{enumerate}

We define a Kripke model $M^\ast = (W^\ast, \prec^\ast, \Vdash^\ast)$ and an element $w^\ast \in W^\ast$ as follows:
\begin{enumerate}
	\item $W^\ast = \{(x, x') \in W \times W' : (x, 3h_\varphi(x)+3, x') \in Z$ or $(x, x')$ has a witness$\}$; 
	\item $(x, x') \prec^\ast (y, y')$ if and only if $x \prec_\varphi y$ and $x' \prec' y'$; 
	\item as in the proof of Lemma \ref{ML1}, for each propositional variable $p$, whether $(x, x') \Vdash^\ast p$ or not is defined by referring to a table obtained from Table \ref{C1} by replacing $x \Vdash^+ p$ with $x \Vdash p$ and deleting `or $x = \mathbb{I}$' in Clause 2; 
	\item $w^\ast = (w, w')$. 
\end{enumerate}

Notice that $W^\ast$ is finite because both $W$ and $W'$ are finite. 
The relation $\prec^\ast$ is transitive because so are both $\prec_\varphi$ and $\prec'$. 
Also the irreflexivity of $\prec^\ast$ is inherited from $\prec'$. 
Therefore $M^\ast$ is in $\Cl_{\bf GL}$. 

Since $h_\varphi(w) \leq n(\varphi)$, $3h_\varphi(w) + 3 \leq 3n(\varphi) + 3$. 
Then $(w, 3h_\varphi(w) + 3, w') \in Z$ because $(w, 3n(\varphi) + 3, w') \in Z$ and $Z$ is downward closed. 
Hence $w^\ast = (w, w') \in W^\ast$.

For Clauses 1 and 2 in the statement of the lemma, it suffices to prove the following claim. 

\hspace{0.1in}

	{\bf Claim 1}. 
For any $\psi \in \Sub(\varphi)$ and $(x, x') \in W^\ast$, 
\begin{enumerate}
	\item if $\psi$ is a $(P_1 \cup P_2, Q_1 \cup Q_2)$-formula and $x \Vdash \psi$, then $(x, x') \Vdash^\ast \psi$; 
	\item if $\psi$ is a $(Q_1 \cup Q_2, P_1 \cup P_2)$-formula and $x \nVdash \psi$, then $(x, x') \nVdash^\ast \psi$. 
\end{enumerate}

	\begin{proof}
	We prove 1 and 2 simultaneously for all $(x, x') \in W^\ast$ by induction on the construction of $\psi$.  
	\begin{itemize}
	\item Base Case (i): $\psi \equiv p$ for some propositional variable $p$. 
	Notice that if $(x, x') \in W^\ast$, then $(x, s, x') \in Z$ for some natural number $s$. 
	Then as in the proof of Lemma \ref{ML1}, we can prove that if $p \in P_1 \cup P_2$ and $x \Vdash p$, then $(x, x') \Vdash^\ast p$, and if $q \in Q_1 \cup Q_2$ and $x \nVdash q$, then $(x, x') \nVdash^\ast q$. 
	\item Base Case (ii): $\psi \equiv \bot$. Trivial. 
	\item Induction Case (i): 1 and 2 follow from induction hypothesis. 
	\item Induction Case (ii): $\psi \equiv \Box \delta$. 
	\begin{enumerate}
	\item Suppose $\Box \delta$ is a $(P_1 \cup P_2, Q_1 \cup Q_2)$-formula and $(x, x') \nVdash^\ast \Box \delta$. 
	Then for some $(y, y') \in W^\ast$, $(x, x') \prec^\ast (y, y')$ and $(y, y') \nVdash^\ast \delta$. 
	Since $\delta$ is also a $(P_1 \cup P_2, Q_1 \cup Q_2)$-formula, $y \nVdash \delta$ by induction hypothesis. 
	Since $x \prec_\varphi y$, we obtain $x \nVdash \Box \delta$. 
	\item Suppose $\Box \delta$ is a $(Q_1 \cup Q_2, P_1 \cup P_2)$-formula and $x \nVdash \Box \delta$. 
	We distinguish the following two cases (a) and (b). 
	\begin{itemize}
	\item Case (a): $(x, 3 h_\varphi(x) + 3, x') \in Z$. 
	Since $x \nVdash \Box \delta$, there exists $y \in W$ such that $x \prec y$ and $y \nVdash \delta$. 
	Then there exists $y' \in W'$ such that $x' \prec' y'$ and $(y, 3h_\varphi(x) + 2, y') \in Z$. 
	In this case, $\langle (x, x'), (y, y') \rangle$ is a witness of $(y, y')$ because $(y, 3h_{\varphi}(x) + 1, y') \in Z$. 
	Therefore $(y, y') \in W^\ast$. 
	\item Case (b): $\langle (u, u'), (v, v') \rangle$ is a witness of $(x, x')$. 
	Since the formula $\Box(\Box \delta \to \delta) \to \Box \delta$ is valid in $M$, we have $x \nVdash \Box(\Box \delta \to \delta)$. 
	Then there exists $y \in W$ such that $x \prec y$, $y \Vdash \Box \delta$ and $y \nVdash \delta$. 
	Since $u \prec v \preceq x \prec y$, we have $u \prec y$ and hence $u \prec_\varphi y$. 
	Thus $u \prec_\varphi^s y$ because $u \nVdash \Box \delta$ and $y \Vdash \Box \delta$. 
	It follows that $h_\varphi(y) + 1 \leq h_\varphi(u)$, and $3h_\varphi(y) + 3 \leq 3 h_\varphi(u)$. 

	Since $(x, 3h_\varphi(u) + 1, x') \in Z$, there exists $y' \in W'$ such that $x' \prec' y'$ and $(y, 3h_\varphi(u), y') \in Z$. 
	By the downward closedness of $Z$, we have $(y, 3h_\varphi(y) + 3, y') \in Z$. 
	Therefore $(y, y') \in W^\ast$. 
	\end{itemize}
	In either case, there exists $(y, y') \in W^\ast$ such that $x \prec_\varphi y$, $x' \prec' y'$ and $y \nVdash \delta$. 
	Thus $(x, x') \prec^\ast (y, y')$. 
	Since $\delta$ is a $(Q_1 \cup Q_2, P_1 \cup P_2)$-formula, we obtain $(y, y') \nVdash^\ast \delta$ by induction hypothesis. 
	We conclude $(x, x') \nVdash^\ast \Box \delta$. 
	\end{enumerate}
	\end{itemize}
	\qed\end{proof}

We finish our proof of Lemma \ref{ML2} by proving the following claim which is Clause 3 in the statement. 

\hspace{1in}

	{\bf Claim 2}. $\Th_m^{(P_2 \cup P_3, Q_2 \cup Q_3)}(w^\ast) \subseteq \Th_m^{(P_2 \cup P_3, Q_2 \cup Q_3)}(w')$. 
	
	\begin{proof}
	Let
	\[
		Z_2 = \{((x, x'), t, x') : (x, x') \in W^\ast\ \text{and}\ t \in \omega\}. 
	\]
	Then $Z_2 \subseteq W^\ast \times \omega \times W'$. 
	\begin{enumerate}
		\item Suppose $((x, x'), t, x') \in Z_2$. 
		Then $(x, x') \in W^\ast$. 
		As in the proof of Claim 1, we can prove that if $p \in P_2 \cup P_3$ and $(x, x') \Vdash^\ast p$, then $x' \Vdash' p$, and if $q \in Q_2 \cup Q_3$ and $(x, x') \nVdash^\ast q$, then $x' \nVdash' q$. 
		\item Suppose $((x, x'), t+1, x') \in Z_2$ and $(x, x') \prec^\ast (y, y')$ for $(y, y') \in W^\ast$. 
		Then $x' \prec' y'$ and $((y, y'), t, y') \in Z_2$. 
		\item Suppose $((x, x'), t + 1, x') \in Z_2$ and $x' \prec' y'$ for $y' \in W'$. 
		We distinguish the following two cases (a) and (b):
		\begin{itemize}
		\item Case (a): $(x, 3h_\varphi(x) + 3, x') \in Z$. 
		Then there exists $y \in W$ such that $x \prec y$ and $(y, 3h_\varphi(x) + 2, y') \in Z$. 
		Then $(y, 3h_\varphi(x) + 1, y') \in Z$. 
		Since $\langle (x, x'), (y, y') \rangle$ is a witness of $(y, y')$, we obtain $(y, y') \in W^\ast$. 

		\item Case (b): $\langle (u, u'), (v, v') \rangle$ is a witness of $(x, x')$. 
	Since $v' \preceq' x' \prec' y'$ and $(v, 3h_\varphi(u) + 2, v') \in Z$, there exists $y \in W$ such that $v \prec y$ and $(y, 3h_\varphi(u)+1, y') \in Z$. 
	Since $x \sim_\varphi v$ and $v \prec y$, we have $x \prec_\varphi y$. 
	\begin{itemize}
	\item 	If $y \sim_\varphi v$, then $\langle (u, u'), (v, v') \rangle$ is also a witness of $(y, y')$. 
	\item If $y \not \sim_\varphi v$, then $u \prec_\varphi^s y$ because $u \prec_\varphi v \prec_\varphi y$ and $y \not \prec_\varphi v$. 
	Then $h_\varphi(y) + 1 \leq h_\varphi(u)$, and hence $3h_\varphi(y) + 3 \leq 3h_\varphi(u)$. 
	By the downward closedness of $Z$, $(y, 3h_\varphi(y) + 3, y') \in Z$. 
	\end{itemize}
	In either case, we obtain $(y, y') \in W^\ast$. 
	\end{itemize}
	Hence there exists $(y, y') \in W^\ast$ such that $(x, x') \prec^\ast (y, y')$ and $((y, y'), t, y') \in Z_2$. 
	\end{enumerate}
	We have proved that $Z_2$ is a layered $(P_2 \cup P_3, Q_2 \cup Q_3)$-bisimulation between $M^\ast$ and $M'$. 
	We have $(w^\ast, m, w') \in Z_2$. 
	By Theorem \ref{BBsim}, we conclude $\Th_m^{(P_2 \cup P_3, Q_2 \cup Q_3)}(w^\ast) \subseteq \Th_m^{(P_2 \cup P_3, Q_2 \cup Q_3)}(w')$. 
	\qed\end{proof}
\qed\end{proof}

\begin{thm}\label{ULIPGL}
${\bf GL}$ enjoys ULIP. 
Moreover, there exists a uniform Lyndon interpolant $\theta$ of $(\varphi, P, Q)$ in ${\bf GL}$ with $d(\theta) \leq 3n(\varphi) + 3$ for any formula $\varphi$ and any finite sets $P$, $Q$ of propositional variables. 
\end{thm}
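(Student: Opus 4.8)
The plan is to exhibit the interpolant explicitly and then let Lemma \ref{ML2} do the model-theoretic work, reprising the $(2 \Rightarrow 1)$ direction of Theorem \ref{KC} but with the sharper depth. Given $\varphi$ and finite sets $P, Q$, I would set $P_1 = P$, $P_2 = v^+(\varphi) \setminus P$, $Q_1 = Q$, $Q_2 = v^-(\varphi) \setminus Q$ and $n = 3n(\varphi) + 3$, and define
\[
	\theta \equiv \bigwedge \{\delta \in F_n^{(P_2, Q_2)} : {\bf GL} \vdash \varphi \to \delta\}.
\]
Since every conjunct is a $(P_2, Q_2)$-formula of modal depth at most $n$, we immediately obtain $v^+(\theta) \subseteq P_2 = v^+(\varphi) \setminus P$, $v^-(\theta) \subseteq Q_2 = v^-(\varphi) \setminus Q$, $d(\theta) \leq 3n(\varphi) + 3$, and ${\bf GL} \vdash \varphi \to \theta$ by construction. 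This settles Clauses 1--3 of Definition \ref{Def:ULIP} together with the stated depth bound.

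The remaining task is Clause 4: for any $\psi$ with $v^+(\psi) \cap P = v^-(\psi) \cap Q = \emptyset$ and ${\bf GL} \vdash \varphi \to \psi$, derive ${\bf GL} \vdash \theta \to \psi$. I would argue the contrapositive. Assuming ${\bf GL} \nvdash \theta \to \psi$, completeness with respect to $\Cl_{\bf GL}$ yields $M' \in \Cl_{\bf GL}$ and $w'$ with $w' \Vdash' \theta$ and $w' \nVdash' \psi$. Because $w' \Vdash' C_n^{(Q_2, P_2)}(w')$, we have ${\bf GL} \nvdash \theta \to \neg C_n^{(Q_2, P_2)}(w')$; and since $\neg C_n^{(Q_2, P_2)}(w')$ is a $(P_2, Q_2)$-formula of depth $\leq n$ whose positive variables avoid $P$ and whose negative variables avoid $Q$, the definition of $\theta$ forces ${\bf GL} \nvdash \varphi \to \neg C_n^{(Q_2, P_2)}(w')$. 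Completeness again supplies $M \in \Cl_{\bf GL}$ and $w$ with $w \Vdash \varphi$ and $w \Vdash C_n^{(Q_2, P_2)}(w')$, whence $\Th_n^{(P_2, Q_2)}(w) \subseteq \Th_n^{(P_2, Q_2)}(w')$ by Proposition \ref{CF}.

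Now the hypotheses of Lemma \ref{ML2} are in place, with $m = d(\psi)$, $P_3 = v^+(\psi) \setminus v^+(\varphi)$ and $Q_3 = v^-(\psi) \setminus v^-(\varphi)$, noting that $\varphi$ is a $(P_1 \cup P_2, Q_1 \cup Q_2)$-formula. The lemma produces $M^\ast \in \Cl_{\bf GL}$ and $w^\ast$. Applying Clause 1 of the lemma to the subformula $\varphi$ of itself gives $w^\ast \Vdash^\ast \varphi$. For $\psi$, the disjointness hypotheses on $P, Q$ force $v^+(\psi) \subseteq P_2 \cup P_3$ and $v^-(\psi) \subseteq Q_2 \cup Q_3$, so $\psi$ is equivalent to a formula in $F_m^{(P_2 \cup P_3, Q_2 \cup Q_3)}$; since $w' \nVdash' \psi$, Clause 3 of the lemma transfers this to $w^\ast \nVdash^\ast \psi$. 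Thus $w^\ast \nVdash^\ast \varphi \to \psi$, and soundness gives ${\bf GL} \nvdash \varphi \to \psi$, completing the contrapositive.

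The substance of the argument has already been concentrated in Lemma \ref{ML2} (the relation $\prec_\varphi^s$, the witness machinery, and the height bound that pins the depth at $3n(\varphi)+3$), so the main obstacle here is purely bookkeeping: one must check that the sets $P_i, Q_i$ satisfy the disjointness and containment demanded by Lemma \ref{ML2}, and that both $\varphi$ and $\psi$ land in the correct $(P,Q)$-formula classes. The only reason one cannot simply quote Theorem \ref{KC} is its depth bound $d(\varphi)$; obtaining the sharper $3n(\varphi)+3$ is precisely what compels us to re-run the $(2 \Rightarrow 1)$ reasoning against Lemma \ref{ML2} in place of the generic class-level ULIP.
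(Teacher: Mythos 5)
Your proposal is correct and coincides with the paper's own proof: the paper defines exactly this interpolant $\theta \equiv \bigwedge \{\delta \in F_{3n(\varphi)+3}^{(P_0,Q_0)} : {\bf GL} \vdash \varphi \to \delta\}$ (with $P_0 = v^+(\varphi)\setminus P$, $Q_0 = v^-(\varphi)\setminus Q$, your $P_2, Q_2$) and then appeals to Lemma \ref{ML2} ``as in our proof of $(2 \Rightarrow 1)$ of Theorem \ref{KC}.'' Your write-up merely makes explicit the bookkeeping (disjointness of the $P_i, Q_i$, membership of $\varphi$ and $\psi$ in the right formula classes) that the paper leaves to the reader.
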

\begin{proof}
This is proved from Lemma \ref{ML2} as in our proof of $(2 \Rightarrow 1)$ of Theorem \ref{KC} by letting
\[
	\theta \equiv \bigwedge \{\delta \in F_{3n(\varphi) + 3}^{(P_0, Q_0)} : L \vdash \varphi \to \delta\}
\]
for $P_0 = v^+(\varphi) \setminus P$ and $Q_0 = v^-(\varphi) \setminus Q$. 
\qed\end{proof}

We prove ULIP for ${\bf Grz}$. 
Let $\Cl_{\bf Grz}$ be the class of all finite transitive and reflexive Kripke models whose irreflexive counterpart is in $\Cl_{\bf GL}$. 
${\bf Grz}$ is sound and complete with respect to the class $\Cl_{\bf Grz}$ (see \cite{Boo93}). 
In this section, we deal with reflexive Kriple models, so we use the symbol $\preceq$ as binary relations of Kripke models. 

Notice that ${\bf Grz}$ proves $\Box (\Box(p \to \Box p) \to p) \to \Box p$ because ${\bf Grz} \vdash \Box \Box (\Box(p \to \Box p) \to p) \to \Box p$ and ${\bf Grz}$ contains ${\bf K4}$ (see van Benthem and Blok \cite{vBB78}). 

\begin{thm}\label{ULIPGrz}
${\bf Grz}$ enjoys ULIP. 
Moreover, there exists a uniform Lyndon interpolant $\theta$ of $(\varphi, P, Q)$ in ${\bf Grz}$ with $d(\theta) \leq 3n(\varphi) + 3$ for any formula $\varphi$ and any finite sets $P$, $Q$ of propositional variables. 
\end{thm}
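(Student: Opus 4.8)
The plan is to mirror the argument for ${\bf GL}$ essentially verbatim, since ${\bf Grz}$ is sound and complete with respect to the class $\Cl_{\bf Grz}$ of finite reflexive transitive models whose strict counterpart lies in $\Cl_{\bf GL}$. First I would establish the exact analogue of Lemma \ref{ML2}, with $\Cl_{\bf GL}$ replaced by $\Cl_{\bf Grz}$ and with $\prec$ replaced throughout by the reflexive relation $\preceq$; the relations $\prec_\varphi$, $\prec_\varphi^s$, $\sim_\varphi$ and the $\varphi$-height $h_\varphi$ are defined as before, and one checks as in the ${\bf GL}$ case that $\prec_\varphi^s$ admits no chain longer than $n(\varphi)+1$, so $h_\varphi(x)\le n(\varphi)$. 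From this lemma the interpolant is extracted precisely as in the proof of Theorem \ref{ULIPGL}: for $P_0=v^+(\varphi)\setminus P$ and $Q_0=v^-(\varphi)\setminus Q$ one sets $\theta\equiv\bigwedge\{\delta\in F_{3n(\varphi)+3}^{(P_0,Q_0)}:{\bf Grz}\vdash\varphi\to\delta\}$ and reruns the argument of $(2\Rightarrow 1)$ of Theorem \ref{KC}, which delivers both ULIP and the bound $d(\theta)\le 3n(\varphi)+3$.

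I would keep the construction of $M^\ast$ from Lemma \ref{ML2} unchanged: the same domain $W^\ast$ assembled from $Z$ and from pairs carrying a witness, the same relation $\prec^\ast$ defined via $\prec_\varphi$ and $\preceq'$, the same valuation table, and the same witness triples indexed by $3h_\varphi(u)+3,\,3h_\varphi(u)+2,\,3h_\varphi(u)+1$. The preservation statement (Claim 1, part 1) and the right-hand bisimulation $Z_2$ (Claim 2, Clause 3) should transfer with only notational changes, since neither invokes L\"ob's axiom. The genuinely new frame obligation is that $M^\ast$ belong to $\Cl_{\bf Grz}$: finiteness and transitivity of $\prec^\ast$ are inherited as before, and reflexivity of $\prec^\ast$ follows from reflexivity of $\preceq'$ together with $x\prec_\varphi x$ (valid in a reflexive $K4$-model); I must additionally verify that the strict counterpart of $\prec^\ast$ has no proper clusters, i.e.\ that no distinct pairs $(x,x'),(y,x')\in W^\ast$ with $x\sim_\varphi y$ arise — a check that is automatic in the irreflexive ${\bf GL}$ setting but not here.

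The hard part will be Case (b) of the box step in Claim 1, part 2, the one place where L\"ob's axiom $\Box(\Box\delta\to\delta)\to\Box\delta$ was used. In a reflexive model $\Box\delta\to\delta$ is now valid, so one can no longer produce a successor satisfying $\Box\delta\wedge\neg\delta$; the replacement is the Grzegorczyk analogue $\Box(\Box(\delta\to\Box\delta)\to\delta)\to\Box\delta$ recorded just before the theorem. From $x\nVdash\Box\delta$ this yields a point $y\succeq x$ with $y\Vdash\Box(\delta\to\Box\delta)$ and $y\nVdash\delta$; passing to a $\preceq$-maximal $\delta$-failing point above $x$ and using the persistence of boxed subformulas along $\prec_\varphi$, I must re-establish a genuine $\prec_\varphi^s$-step from the witness root $u$ to $y$, so that $h_\varphi(y)+1\le h_\varphi(u)$ and hence $(y,y')\in W^\ast$ sits at the correct layered index. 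Confirming that this reflexive variant of the height drop is sound — that the relevant boxed subformula really flips from false at $u$ to true on the cone above $y$, rather than merely at a single point — and that the depth budget $3n(\varphi)+3$ is not exceeded, is the crux.

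Finally, I would remark that the shortcut used for ${\bf KT}$ and ${\bf KTB}$, namely applying Proposition \ref{ST} to ${\bf Grz}={\bf GL}^\star$, is unavailable here: that proposition requires $L_0\subseteq L_1$, whereas ${\bf GL}\not\subseteq{\bf Grz}$ (for instance ${\bf GL}\vdash\Diamond\top\to\Diamond\Box\bot$, which fails in every reflexive model). This is exactly why the direct model-theoretic construction above, rather than a translation argument, is needed for ${\bf Grz}$.
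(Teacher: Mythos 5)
Your skeleton matches the paper's (adapt the construction of Lemma~\ref{ML2} to reflexive models, then extract $\theta$ exactly as in Theorem~\ref{ULIPGL}), and you have correctly located the danger spot in Case~(b) of the box step, as well as correctly observed that Proposition~\ref{ST} cannot be applied since ${\bf GL} \not\subseteq {\bf Grz}$. However, the crux you defer is not something to be ``confirmed'': with the definitions you propose to keep, it is false, and repairing it is precisely the new idea in the paper's proof. If $\prec_\varphi^s$ is kept as in the ${\bf GL}$ case (a flip of some $\Box\psi \in \Sub(\varphi)$ from $u \nVdash \Box\psi$ to $y \Vdash \Box\psi$), then no height drop is available: the point $y$ produced by the Grzegorczyk theorem $\Box(\Box(\delta \to \Box\delta) \to \delta) \to \Box\delta$ satisfies $y \Vdash \Box(\delta \to \Box\delta)$ and $y \nVdash \delta$, hence $y \nVdash \Box\delta$ \emph{by reflexivity}, so the subformula $\Box\delta$ can never flip at $y$; and if $\Box\delta$ is the only boxed subformula of $\varphi$, no flip in your sense occurs at all, so $u \prec_\varphi^s y$ fails, $h_\varphi(y) + 1 \leq h_\varphi(u)$ cannot be concluded, and $(y,y')$ need not land in $W^\ast$. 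The formula that genuinely flips from false at $u$ to true at $y$ is $\Box(\delta \to \Box\delta)$, which is not a boxed subformula of $\varphi$, so your strict relation is blind to it.

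The paper's proof is therefore not the ${\bf GL}$ lemma ``essentially verbatim''; it changes the combinatorial core. First, $\prec_\varphi^s$ is redefined: $x \prec_\varphi^s y$ iff $x \preceq_\varphi y$ and for some $\Box\psi \in \Sub(\varphi)$, $x \nVdash \Box(\psi \to \Box\psi)$ while $y \Vdash \Box(\psi \to \Box\psi)$ (there are still only $n(\varphi)$ such flip formulas, so $h_\varphi(x) \leq n(\varphi)$ and the bound $3n(\varphi)+3$ survives). Second, the witness condition $x \sim_\varphi v$ is weakened to $x \preceq_\varphi v$. Third, Case~2 of the box step begins with a reduction that your plan lacks: if $x \Vdash \Box(\delta \to \Box\delta)$, then $x \nVdash \delta$, so $(x,x') \nVdash^\ast \delta$ by the induction hypothesis and $(x,x') \nVdash^\ast \Box\delta$ by reflexivity of $\preceq^\ast$; hence one may assume $x \nVdash \Box(\delta \to \Box\delta)$, which by upward persistence of boxed formulas along $u \preceq v \preceq x$ yields $u \nVdash \Box(\delta \to \Box\delta)$, so that the Grzegorczyk theorem produces exactly a flip in the new sense and the height drops. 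Your remaining observations are sound but secondary: the no-proper-cluster requirement for $M^\ast \in \Cl_{\bf Grz}$ that you flag is indeed a point the paper passes over silently (it simply asserts membership), so raising it is fair, but it does not substitute for the missing redefinition of $\prec_\varphi^s$, without which the induction cannot be completed.
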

\begin{proof}
Let $P_1, P_2, P_3, Q_1, Q_2$ and $Q_3$ be any finite sets of propositional variables with $P_1$, $P_2$ and $P_3$ are pairwise disjoint and $Q_1$, $Q_2$ and $Q_3$ are pairwise disjoint. 
Let $\varphi$ be any $(P_1 \cup P_2, Q_1 \cup Q_2)$-formula. 
Let $M = (W, \preceq, \Vdash)$ and $M' = (W', \preceq', \Vdash')$ be any Kripke models in $\Cl_{\bf Grz}$, $w \in W$ and $w' \in W'$ be any elements and $m$ be any natural number. 
Suppose $\Th_{3 n(\varphi) +3}^{(P_2, Q_2)}(w) \subseteq \Th_{3 n(\varphi) + 3}^{(P_2, Q_2)}(w')$, and let $Z$ be a downward closed layered $(P_2, Q_2)$-bisimulation between $M$ and $M'$ such that $(w, 3n(\varphi)+3, w') \in Z$. 
For ULIP of ${\bf Grz}$, it suffices to prove that there exists a Kripke model $M^\ast = (W^\ast, \preceq^\ast, \Vdash^\ast)$ in $\Cl_{\bf Grz}$ and $w^\ast \in W^\ast$ such that for any $\psi \in {\sf Sub}(\varphi)$, 
\begin{enumerate}
	\item If $\psi$ is a $(P_1 \cup P_2, Q_1 \cup Q_2)$-formula and $w \Vdash \psi$, then $w^\ast \Vdash^\ast \psi$; 
	\item If $\psi$ is a $(Q_1 \cup Q_2, P_1 \cup P_2)$-formula and $w \nVdash \psi$, then $w^\ast \nVdash^\ast \psi$; 
	\item $\Th_m^{(P_2 \cup P_3, Q_2 \cup Q_3)}(w^\ast) \subseteq \Th_m^{(P_2 \cup P_3, Q_2 \cup Q_3)}(w')$.  
\end{enumerate}

We define binary relations $\preceq_\varphi$ and $\prec_\varphi^s$ on $W$ as follows: for $x, y \in W$, 
\begin{itemize}
	\item $x \preceq_\varphi y : \Leftrightarrow$ for any $\Box \psi \in \Sub(\varphi)$, if $x \Vdash \Box \psi$, then $y \Vdash \psi \land \Box \psi$;  
	\item $x \prec_\varphi^s y : \Leftrightarrow x \preceq_\varphi y$ and for some $\Box \psi \in \Sub(\varphi)$, $x \nVdash \Box (\psi \to \Box \psi)$ and $y \Vdash \Box (\psi \to \Box \psi)$. 
\end{itemize}
Then $\preceq_\varphi$ is transitive and reflexive because $\preceq$ is reflexive. 
Also $\prec_\varphi^s$ is transitive and irreflexive. 
For each $x \in W$, let $h_\varphi(x)$ be the $\varphi$-height of $x$ with respect to the relation $\prec_\varphi^s$ as in the proof of Lemma \ref{ML2}. 
Then $h_\varphi(x) \leq n(\varphi)$. 

For $(x, x'), (u, u'), (v, v') \in W \times W'$, we say that $\langle (u, u'), (v, v') \rangle$ is a {\it witness of $(x, x')$} if the following conditions hold: 
\begin{enumerate}
	\item $u \preceq v \preceq x$ and $u' \preceq' v' \preceq' x'$; 
	\item $x \preceq_\varphi v$; 
	\item $(u, 3 h_\varphi(u) + 3, u')$, $(v, 3 h_\varphi(u) + 2, v')$ and $(x, 3 h_\varphi(u) + 1, x')$ are in $Z$. 
\end{enumerate}

The definitions of a Kripke model $M^\ast = (W^\ast, \preceq^\ast, \Vdash^\ast)$ and an element $w^\ast \in W^\ast$ are analogous as in the proof of Lemma \ref{ML2}. 
Then $M^\ast$ is in $\Cl_{\bf Grz}$. 
Also we have $w^\ast = (w, w') \in W^\ast$. 

The proof of the clause 3 in the statement is completely analogous as in the proof of Lemma \ref{ML2}. 
It suffices to prove the following claim. 

\hspace{0.1in}

	{\bf Claim 1}. 
For any $\psi \in \Sub(\varphi)$ and $(x, x') \in W^\ast$, 
\begin{enumerate}
	\item if $\psi$ is a $(P_1 \cup P_2, Q_1 \cup Q_2)$-formula and $x \Vdash \psi$, then $(x, x') \Vdash^\ast \psi$; 
	\item if $\psi$ is a $(Q_1 \cup Q_2, P_1 \cup P_2)$-formula and $x \nVdash \psi$, then $(x, x') \nVdash^\ast \psi$. 
\end{enumerate}

	\begin{proof}
	By induction on the construction of $\psi$. 
	We only prove 2 for the case $\psi \equiv \Box \delta$. 
	
	Suppose $\Box \delta$ is a $(Q_1 \cup Q_2, P_1 \cup P_2)$-formula and $x \nVdash \Box \delta$. 
	If $x \Vdash \Box (\delta \to \Box \delta)$, then $x \Vdash \delta \to \Box \delta$, and hence $x \nVdash \delta$. 
	Then $(x, x') \nVdash^\ast \delta$ by induction hypothesis. 
	Since $\preceq^\ast$ is reflexive, $(x, x') \nVdash^\ast \Box \delta$. 
	Thus we may assume $x \nVdash \Box (\delta \to \Box \delta)$.

	We distinguish the following two cases (a) and (b). 
	\begin{itemize}
	\item Case (a): $(x, 3 h_\varphi(x) + 3, x') \in Z$. 
	Since $x \nVdash \Box \delta$, there exists $y \in W$ such that $x \preceq y$ and $y \nVdash \delta$. 
	Then there exists $y' \in W'$ such that $x' \preceq' y'$ and $(y, 3h_\varphi(x) + 2, y') \in Z$. 
	Since $\langle (x, x'), (y, y') \rangle$ is a witness of $(y, y')$, we obtain $(y, y') \in W^\ast$. 
	\item Case (b): $\langle (u, u'), (v, v') \rangle$ is a witness of $(x, x')$. 
	Since the formula $\Box(\Box (\delta \to \Box \delta) \to \delta) \to \Box \delta$ is valid in $M$, we have $x \nVdash \Box(\Box (\delta \to \Box \delta) \to \delta)$. 
	Then there exists $y \in W$ such that $x \preceq y$, $y \Vdash \Box (\delta \to \Box \delta)$ and $y \nVdash \delta$. 
	Since $u \preceq v \preceq x \preceq y$, we have $u \preceq y$ and hence $u \preceq_\varphi y$. 
	Thus $u \prec_\varphi^s y$ because $u \nVdash \Box (\delta \to \Box \delta)$ and $y \Vdash \Box (\delta \to \Box \delta)$. 
	It follows that $h_\varphi(y) + 1 \leq h_\varphi(u)$, and $3h_\varphi(y) + 3 \leq 3 h_\varphi(u)$. 

	Since $(x, 3h_\varphi(u) + 1, x') \in Z$, there exists $y' \in W'$ such that $x' \preceq' y'$ and $(y, 3h_\varphi(u), y') \in Z$. 
	By the downward closedness of $Z$, we have $(y, 3h_\varphi(y) + 3, y') \in Z$. 
	Therefore $(y, y') \in W^\ast$. 
	\end{itemize}
	In either case, there exists $(y, y') \in W^\ast$ such that $x \preceq_\varphi y$, $x' \preceq' y'$ and $y \nVdash \delta$. 
	Since $\delta$ is a $(Q_1 \cup Q_2, P_1 \cup P_2)$-formula, we obtain $(y, y') \nVdash^\ast \delta$ by induction hypothesis. 
	We conclude $(x, x') \nVdash^\ast \Box \delta$ because $(x, x') \preceq^\ast (y, y')$. 
	\qed\end{proof}
This completes our proof of Theorem \ref{ULIPGrz}. 
\qed\end{proof}

We close this paper with the following problems.

\begin{prob}
Is the upper bound $3n(\varphi) + 3$ in the statements of Theorems \ref{ULIPGL} and \ref{ULIPGrz} optimal?
\end{prob}

Let ${\bf Go} = \K + \{\Box(\Box(p \to \Box p) \to p) \land (\Box(p \to \Box p) \to p) \to p\}$. 
It is known that ${\bf Go} \subseteq {\bf GL} \cap {\bf Grz}$ and ${\bf Go}^\star = {\bf Grz}$ (see \cite{Lit07}). 
Then by Proposition \ref{ST}, ULIP of ${\bf Go}$ implies ULIP of ${\bf Grz}$. 
However, ULIP for ${\bf Go}$ is open. 
It is announced in \cite{ZDO15} that ${\bf Go}$ enjoys UIP. 

\begin{prob}
Does ${\bf Go}$ enjoy ULIP?
\end{prob}

The following problem is important for our work, but it is not settled yet. 

\begin{prob}
Is there a logic having both UIP and LIP but does not have ULIP?
\end{prob}

\bibliographystyle{plain}
\bibliography{ref}

\end{document}